\documentclass[preprint,11pt]{elsarticle}
\usepackage{amssymb,amsthm,amsmath}
\usepackage{enumerate}
\usepackage{epsfig,graphicx}
\usepackage{algorithm}
\usepackage{algpseudocode}
\usepackage{algorithmicx}

\journal{Journal of Computational and Applied Mathematics}

\newtheorem{corollary}{Corollary}
\newtheorem{example}{Example}
\newtheorem{lemm}{Lemma}
\newtheorem{thm}{Theorem}
\newtheorem{prp}{Proposition}

\begin{document}

\begin{frontmatter}

\title{Computation of isotopisms of algebras over finite fields by means of graph invariants}
\author{O. J. Falc\'on$^{1}$}
\ead{oscfalgan@yahoo.es}
\author{R. M. Falc\'on$^2$}
\ead{rafalgan@us.es}
\author{J. N\'u\~nez$^1$}
\ead{jnvaldes@us.es}
\author{A. M. Pacheco$^{3}$}
\ead{ampacheco@uloyola.es}
\author{M. T. Villar$^1$}
\ead{villar@us.es}
\address{$^1$ Department of Geometry and Topology. University of Seville, Spain.\\
$^2$ Department of Applied Mathematics I. University of Seville, Spain.\\
$^3$ Department of Quantitative Methods. Loyola University Andalusia, Spain.
}

\begin{abstract} In this paper we define a pair of faithful functors that map isomorphic and isotopic finite-dimensional algebras over finite fields to isomorphic graphs. These functors reduce the cost of computation that is usually required to determine whether two algebras are isomorphic. In order to illustrate their efficiency, we determine explicitly the classification of two- and three-dimensional partial quasigroup rings.
\end{abstract}

\begin{keyword}
Graph theory \sep finite field \sep isomorphism \sep Latin square.
\MSC 05C25 \sep 05C30 \sep 05B15.
\end{keyword}
\end{frontmatter}

\section{Introduction}

Graph invariants constitute an interesting tool in Chemistry, Communication or Engineering \cite{Dobrynin2001, Khalifeh2010, Yousefi2011}. In Mathematics, one of the topics for which graph invariants have revealed to play an important role is the classical problem of deciding whether two algebras are isomorphic. This problem is usually dealt with by computing the reduced Gr\"obner basis of the system of polynomial equations that is uniquely related to the structure constants of both algebras. This computation is, however, very sensitive to the number of variables \cite{Gao2009} and gives rise to distinct problems of computation time and memory usage even for low-dimensional algebras \cite{Falcon2016a, Graaf2005}. This paper deals with Graph Theory in order to reduce this cost of computation.

Graph invariants have been proposed in the last years as an efficient alternative to study isomorphisms of distinct types of algebras \cite{Bocian2014, Ceballos2016, Kaveh2011}. Nevertheless, the problem of identifying a functor that relates the category of algebras with that of graphs remains still open. Based on a proposal of McKay et al. \cite{McKay2007} for identifying isotopisms of Latin squares with isomorphisms of vertex-colored graphs, we describe in Section 3 a pair of graphs that enable us to find faithful functors between finite-dimensional algebras over finite fields and these types of graphs. These functors map isomorphic and isotopic algebras to isomorphic graphs. Reciprocally, any pair of isomorphic graphs is uniquely related to a pair of algebras so that there exists a multiplicative map between them. The main advantage of our proposal, apart from the reduction of the mentioned cost of computation, is the feasibility of studying the possible isomorphism between two given finite-dimensional algebras defined over the same field, whatever the types of both algebras are. As an illustrative example, we focus in Section 4 on the classification of partial quasigroup rings according to the known isotopism classes of partial Latin squares on which they are based.

\section{Preliminaries}

In this section we expose some basic concepts and results on Graph Theory, isotopisms of algebras, partial Latin squares and Computational Algebraic Geometry that we use throughout the paper. For more details about these topics we refer, respectively, to the manuscripts \cite{Harary1969, Albert1942, Denes1974, Cox1998}.

\subsection{Graph Theory}

A {\em graph} is a pair $G=(V,E)$ formed by a set $V$ of {\em vertices} and a set $E$ of $2$-subsets of $V$ called {\em edges}. Two vertices defining an edge are said to be {\em adjacent}. The {\em degree} of a vertex $v$ is the number $d(v)$ of edges containing $v$. The graph $G$ is {\em vertex-colored} if there exists a partition of $V$ into color sets. The color of a vertex $v$ is denoted as $\mathrm{color}(v)$. An {\em isomorphism} between two vertex-colored graphs $G$ and $G'$ is any bijective map $f$ between their sets of vertices that preserves collinearity and color sets, that is, such that it maps edges to edges and $\mathrm{color}(f(v))=\mathrm{color}(v)$, for all vertex $v$ in $G$.

\subsection{Isotopisms of algebras}

Two algebras $A$ and $A'$ over a field $\mathbb{K}$ are said to be {\em isotopic} if there exist three non-singular linear transformations $f$, $g$ and $h$ from $A$ to $A'$ such that $f(u)g(v)=h(uv)$, for all $u,v\in A$. The triple $(f,g,h)$ is an {\em isotopism} between $A$ and $A'$. If $f=g=h$, then this constitutes an {\em isomorphism}.

\vspace{0.1cm}

The {\em structure constants} of an $n$-dimensional algebra $A$ over a field $\mathbb{K}$ of basis $\{e_1,\ldots,e_n\}$ are the numbers $c_{ij}^k\in \mathbb{K}$ such that $e_ie_j = \sum_{k=1}^n c_{ij}^k e_k$, for all $i, j \leq n$. If all of them are zeros, then $A$ is {\em abelian}. In particular, the $n$-dimensional abelian algebra is not isotopic to any other $n$-dimensional algebra.

\vspace{0.2cm}

The {\em left annihilator} of a vector subspace $S$ of the algebra $A$ is the set $\mathrm{Ann}_{A^-}(S)=\{u\in A\mid\, uv=0, \text { for all } v\in S\}$. Its {\em right annihilator} is the set $\mathrm{Ann}_{A^+}(S)=\{u\in A\mid\, vu=0, \text { for all } v\in S\}$. The intersection of both sets is the {\em annihilator} $\mathrm{Ann}_A(S)$.

\begin{lemm} \label{lemm_annihilator} Let $(f,g,h)$ be an isotopism between two $n$-dimensional algebras $A$ and $A'$, and let $S$ be a vector subspace of $A$. Then,
\begin{enumerate}[a)]
\item $f(\mathrm{Ann}_{A^-}(S)) = \mathrm{Ann}_{{A'}^-}(g(S))$.
\item $g(\mathrm{Ann}_{A^+}(S)) = \mathrm{Ann}_{{A'}^+}(f(S))$.
\item $f(\mathrm{Ann}_{A^-}(S))\cap g(\mathrm{Ann}_{A^+}(S)) = \mathrm{Ann}_{A'}(f(S)\cap g(S)).$
\end{enumerate}
\end{lemm}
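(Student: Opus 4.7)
The plan is to verify all three parts by a direct chase exploiting only the defining identity $f(u)g(v)=h(uv)$ of the isotopism together with the fact that $f$, $g$ and $h$ are bijective linear maps (so each sends $0$ to $0$, and $h(x)=0$ forces $x=0$). The three parts are naturally nested: once (a) and (b) are established independently, (c) should reduce to an intersection manipulation built on top of them.

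For (a) I would argue both inclusions. Forward: pick $u\in\mathrm{Ann}_{A^-}(S)$ and any $v\in S$; then $f(u)g(v)=h(uv)=h(0)=0$, and since every element of $g(S)$ has the form $g(v)$ with $v\in S$, this gives $f(u)\in\mathrm{Ann}_{{A'}^-}(g(S))$. Backward: given $u'\in\mathrm{Ann}_{{A'}^-}(g(S))$, use the bijectivity of $f$ to write $u'=f(u)$ for a unique $u\in A$; for every $v\in S$, $h(uv)=f(u)g(v)=u'g(v)=0$, and injectivity of $h$ forces $uv=0$, so $u\in\mathrm{Ann}_{A^-}(S)$ and $u'\in f(\mathrm{Ann}_{A^-}(S))$. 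Part (b) follows by the mirror argument, swapping the roles of $f$ and $g$ and reading the product in the opposite order.

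For (c), applying (a) and (b) rewrites the left-hand side as $\mathrm{Ann}_{{A'}^-}(g(S))\cap \mathrm{Ann}_{{A'}^+}(f(S))$. The containment in $\mathrm{Ann}_{A'}(f(S)\cap g(S))$ is then routine: any $u'$ in this intersection left-annihilates every $g(v)$ and right-annihilates every $f(v)$, so for each $v'\in f(S)\cap g(S)$, writing $v'=f(v_1)=g(v_2)$, one obtains $u'v'=u'g(v_2)=0$ and $v'u'=f(v_1)u'=0$. The reverse inclusion is where I expect the main obstacle to lie, since an element of $\mathrm{Ann}_{A'}(f(S)\cap g(S))$ is only known to annihilate the possibly small intersection $f(S)\cap g(S)$, whereas one needs the separated one-sided conditions on all of $g(S)$ and all of $f(S)$. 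My strategy here would be to pull the annihilation relation back to $A$ via $h^{-1}$ and use the non-singularity of $f$ and $g$ to disentangle the two conditions, producing preimages in $\mathrm{Ann}_{A^-}(S)$ and $\mathrm{Ann}_{A^+}(S)$ respectively; this disentanglement is the delicate step and may well require exploiting additional structure of $S$ that the setting of the paper provides.
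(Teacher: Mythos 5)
Your treatment of (a) and (b) is correct and is essentially the paper's own argument: the paper proves (a) by the same two inclusions (forward via $f(u)g(v)=h(uv)=h(0)=0$, backward via the regularity of $f$ and $h$) and declares (b) analogous.

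For (c), the obstacle you flag at the reverse inclusion is real and cannot be overcome: assertion (c) as stated is false in general. What intersecting (a) and (b) actually yields is $f(\mathrm{Ann}_{A^-}(S))\cap g(\mathrm{Ann}_{A^+}(S))=\mathrm{Ann}_{{A'}^-}(g(S))\cap\mathrm{Ann}_{{A'}^+}(f(S))$, and this agrees with $\mathrm{Ann}_{A'}(f(S)\cap g(S))$ only when $f(S)=g(S)$; your forward inclusion is the one containment that always holds, since annihilators are antitone in their argument. For a concrete counterexample to the other containment, take $A=A'=\mathbb{F}_2[\mathbb{Z}_2]$ with basis $\{e_1,e_2\}$, where $e_1$ is the identity and $e_2e_2=e_1$ (a quasigroup ring, so squarely within the paper's scope). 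The triple $(\mathrm{Id},g,g)$, with $g$ the linear map swapping $e_1$ and $e_2$, is an autotopism, as one checks on the basis. Taking $S=\langle e_1\rangle$ gives $f(S)=\{0,e_1\}$ and $g(S)=\{0,e_2\}$, so $f(S)\cap g(S)=\{0\}$ and the right-hand side of (c) is all of $A$; but $\mathrm{Ann}_{A^-}(S)=\mathrm{Ann}_{A^+}(S)=\{0\}$ because $e_1$ is a unit, so the left-hand side is $\{0\}$. The paper's own proof offers no help here: it merely asserts that (c) ``is a consequence of (a) and (b),'' which is true only for the corrected identity displayed above (or under the extra hypothesis $f(S)=g(S)$, in particular for $S=A$, the only case the paper actually uses later, where $f(A)=g(A)=A'$). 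So you should not look for the missing disentanglement step; instead prove the corrected form, which is immediate from (a) and (b), and record the hypothesis under which the stated form holds.
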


\begin{proof} Let us prove assertion (a). Assertion (b) follows similarly and assertion (c) is a consequence of (a) and (b). Let $u\in g(S)$ and $v\in f(\mathrm{Ann}_{A^-}(S))$. Then, $vu=f(f^{-1}(v))g(g^{-1}(u))=h(f^{-1}(v)g^{-1}(u))=h(0)=0$, because $g^{-1}(u)\in S$ and $f^{-1}(v)\in \mathrm{Ann}_{A^-}(S)$. Hence, $f(\mathrm{Ann}_{A^-}(S))$ $\subseteq \mathrm{Ann}_{{A'}^-}(g(S))$. Now, let $u\in \mathrm{Ann}_{{A'}^-}(g(S))$ and $v\in S$. From the regularity of $f$, we have that $h(f^{-1}(u)v)=ug(v)=0$. The regularity of $h$ involves that $f^{-1}(u)v=0$. Thus, $u\in f(\mathrm{Ann}_{A^-}(S))$ and hence, $\mathrm{Ann}_{{A'}^-}(g(S))\subseteq f(\mathrm{Ann}_{A^-}(S))$.
\end{proof}

\vspace{0.2cm}

The {\em derived algebra} of $A$ is the subalgebra
$A^2=\{uv\mid\, u,v\in A\}\subseteq A$.

\begin{lemm} \label{lemm_M1} Let $(f,g,h)$ be an isotopism between two $n$-dimensional algebras $A$ and $A'$. Then, $h(A^2)=A'^2$.
\end{lemm}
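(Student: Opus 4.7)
The plan is to prove the equality of subspaces by showing both inclusions, using in each direction the defining identity of an isotopism together with the non-singularity (hence bijectivity) of $f$, $g$, and $h$.

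First I would take an arbitrary generator $h(uv)$ of $h(A^2)$, with $u,v\in A$. Applying the isotopism identity directly gives $h(uv)=f(u)g(v)$, and since $f(u),g(v)\in A'$, this element belongs to ${A'}^2$. Because $h$ is linear, this extends from the generating products to their linear span, yielding $h(A^2)\subseteq {A'}^2$.

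For the reverse inclusion, I would start with an arbitrary generator $u'v'$ of ${A'}^2$. Non-singularity of $f$ and $g$ lets me set $u=f^{-1}(u')\in A$ and $v=g^{-1}(v')\in A$, so that $u'v'=f(u)g(v)=h(uv)\in h(A^2)$. Again linearity of $h$ (together with that of $f$ and $g$) promotes this from a statement about products to one about their spans, giving ${A'}^2\subseteq h(A^2)$.

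The main point requiring care is not any single calculation but the interpretation of $A^2$: the definition in the excerpt describes it as the set of products $\{uv\mid u,v\in A\}$, yet calls it a subalgebra, so it must really be understood as the subspace these products span. The argument above handles both readings simultaneously, since the set-level equality of product sets under $h$ is what the isotopism identity gives directly, and linearity of $h$ transports this equality to the spanned subspaces. No additional obstacle arises; the lemma is essentially an immediate corollary of the defining relation $h(uv)=f(u)g(v)$ combined with the bijectivity of the three component maps.
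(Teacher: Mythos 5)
Your proposal is correct and is essentially the paper's own argument written out in detail: the paper compresses both inclusions into the single line $A'^2=f(A)g(A)=h(A^2)$, using the regularity of $f$ and $g$ to get $f(A)=g(A)=A'$, which is exactly your two-directional argument. Your additional remark about reading $A^2$ as the span of the products is a reasonable clarification but does not change the substance.
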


\begin{proof}  The regularity of $f$ and $g$ involves that $f(A)=g(A)=A'$ and hence, $A'^2=f(A)g(A)=h(A^2)$.
\end{proof}

\vspace{0.2cm}

Let $\cdot$ be a partial binary operation over the set $[n]=\{1,\ldots,n\}$. The pair $([n],\cdot)$ is called a {\em partial magma} of {\em order} $n$. It is {\em isotopic} to a partial magma $([n],\circ)$ if there exist three permutations $\alpha$, $\beta$ and $\gamma$ in the symmetric group $S_n$ such that $\alpha(i)\circ\beta(j)=\gamma(i\cdot j)$, for all $i,j\leq n$ such that $i\cdot j$ exists. If $\alpha=\beta=\gamma$, then the partial magmas are said to be {\em isomorphic}. The triple $(\alpha,\beta,\gamma)$ is an {\em isotopism} of partial magmas (an {\em isomorphism} if $\alpha=\beta=\gamma$).

\vspace{0.2cm}

A {\em partial magma algebra} $A^{\cdot}$ {\em based on} a partial magma $([n],\cdot)$ is an $n$-dimensional algebra over a field $\mathbb{K}$ such that there exists a basis $\{e_1,\ldots,e_n\}$ satisfying that, if $i\cdot j$ exists for some pair of elements $i,j\leq n$, then $e_ie_j=c_{ij}e_{i\cdot j}$ for some non-zero structure constant $c_{ij}\in \mathbb{K}\setminus\{0\}$. If all the structure constants are equal to $1$, then this is called a {\em partial magma ring}.

\begin{lemm}\label{lemm_partial_magma} Two partial magma rings are isotopic (isomorphic, respectively) if their respective partial magmas on which they are based are isotopic (isomorphic, respectively).
\end{lemm}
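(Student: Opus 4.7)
The plan is to transport the isotopism of partial magmas directly to the algebra setting by defining the three required linear maps basis-wise. Let $([n],\cdot)$ and $([n],\circ)$ be the partial magmas underlying the partial magma rings $A^\cdot$ and $A^\circ$, with distinguished bases $\{e_1,\ldots,e_n\}$ and $\{e'_1,\ldots,e'_n\}$ satisfying $e_ie_j=e_{i\cdot j}$ (resp.\ $e'_{i'}e'_{j'}=e'_{i'\circ j'}$) whenever the partial product is defined, and equal to $0$ otherwise. Given an isotopism $(\alpha,\beta,\gamma)$ between the two partial magmas, I would define $f,g,h\colon A^\cdot\to A^\circ$ as the linear extensions of $f(e_i)=e'_{\alpha(i)}$, $g(e_i)=e'_{\beta(i)}$, $h(e_i)=e'_{\gamma(i)}$. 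Since $\alpha,\beta,\gamma$ are permutations, each of these maps sends a basis to a basis and is therefore non-singular.

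Next I would verify the multiplicative identity $f(u)g(v)=h(uv)$. By bilinearity it suffices to check it on pairs of basis vectors $e_i,e_j$, and I would split into two cases. If $i\cdot j$ exists, then $h(e_ie_j)=h(e_{i\cdot j})=e'_{\gamma(i\cdot j)}$, while $f(e_i)g(e_j)=e'_{\alpha(i)}e'_{\beta(j)}=e'_{\alpha(i)\circ\beta(j)}=e'_{\gamma(i\cdot j)}$ by the defining equation of the isotopism of partial magmas. If $i\cdot j$ does not exist, then $h(e_ie_j)=h(0)=0$, and I need $e'_{\alpha(i)}e'_{\beta(j)}=0$, i.e.\ that $\alpha(i)\circ\beta(j)$ is also undefined.

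The main subtlety of the argument lies in this last point: one must show that the existence of $i\cdot j$ is equivalent to the existence of $\alpha(i)\circ\beta(j)$, whereas the definition of isotopism given above only asserts one direction of the implication. I would resolve this by observing that $(\alpha^{-1},\beta^{-1},\gamma^{-1})$ is an isotopism from $([n],\circ)$ to $([n],\cdot)$: if $\alpha(i)\circ\beta(j)$ exists and equals some $k'\in[n]$, then applying this inverse isotopism with $i'=\alpha(i)$, $j'=\beta(j)$ would yield $i\cdot j=\gamma^{-1}(k')$, forcing $i\cdot j$ to be defined. Together with the original implication, this gives the required equivalence and closes the undefined-case verification.

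Finally, the isomorphism assertion is an immediate specialization: if $\alpha=\beta=\gamma$ then the construction yields $f=g=h$, which by the definition in Section~2.2 constitutes an isomorphism between $A^\cdot$ and $A^\circ$.
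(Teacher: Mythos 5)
Your proposal is correct and follows essentially the same route as the paper: both define $f$, $g$, $h$ on the distinguished bases via the permutations $\alpha$, $\beta$, $\gamma$ and extend linearly, checking the multiplicative identity on pairs of basis vectors. You are in fact more careful than the paper, whose proof only verifies the case where $i\cdot j$ exists; your treatment of the undefined case is sound provided the definition of isotopism of partial magmas is read in the intended symmetric sense (defined products correspond bijectively, so the inverse triple is again an isotopism) --- a reading the paper itself relies on when it asserts that isotopisms preserve the number of filled cells, since the literal one-directional definition alone would not justify the ``observation'' that $(\alpha^{-1},\beta^{-1},\gamma^{-1})$ is an isotopism.
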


\begin{proof} Let $A^{\cdot}$ and $A^{\circ}$ be two partial magma rings based, respectively, on two isotopic partial magmas $([n],\cdot)$ and $([n],\circ)$. Let $\{e_1,\ldots,e_n\}$ and $\{e'_1,\ldots,e'_n\}$ be the respective bases of these two algebras and let $(f,g,h)$ be an isotopism between their corresponding partial magmas. For each $\alpha\in\{f,g,h\}$, let us define the map $\overline{\alpha}(e_i)=e'_{\alpha(i)}$. Then, $\overline{f}(e_i)\overline{g}(e_j)=e'_{f(i)}e'_{g(j)}= e'_{f(i)\circ g(j)}=e'_{h(i\cdot j)} =\overline{h}(e_{i\cdot j})= \overline{h}(e_ie_j)$. From linearity, the triple $(\overline{f},\overline{g},\overline{h})$ determines an isotopism between $A^{\cdot}$ and $A^{\circ}$. If $f=g=h$, then this constitutes an isomorphism.
\end{proof}

\vspace{0.2cm}

The reciprocal of Lemma \ref{lemm_partial_magma} is not true in general. Thus, for instance, the two partial magmas $([2],\cdot)$ and $([2],\circ)$ that are respectively described by the non-zero products $1\cdot 1=1$ and $1\circ 1 = 1 = 2\circ 1$ are not isotopic. Nevertheless, the partial magma rings $A^{\cdot}$ and $A^{\circ}$, with respective bases $\{e_1,e_2\}$ and $\{e'_1,e'_2\}$, are isotopic by means of the isotopism $(f,\mathrm{Id},\mathrm{Id})$, where the linear transformation $f$ is described by $f(e_1)=e'_1$ and $f(e_2) = e'_2-e'_1$.

\subsection{Partial Latin squares}

A {\em partial quasigroup} is a partial magma $([n],\cdot)$ such that if the equations $ix=j$ and $yi=j$, with $i,j\in [n]$, have solutions for $x$ and $y$ in $[n]$, then these solutions are unique. The concepts of {\em partial quasigroup algebras} and {\em partial quasigroup rings} arise similarly to those of partial magma algebras and rings. Lemma \ref{lemm_partial_magma} also holds analogously for partial quasigroup rings. Every partial quasigroup of order $n$ constitutes the multiplication table of a {\em partial Latin square} of order $n$, that is, an $n \times n$ array in which each cell is either empty or contains one element chosen from the set $[n]$, such that each symbol occurs at most once in each row and in each column. Every isotopism of a partial quasigroup is uniquely related to a permutation of the rows, columns and symbols of the corresponding partial Latin square. The distribution of partial Latin squares into isotopism classes is known for order up six \cite{Falcon2013, Falcon2015a}. In this paper we make use of graph invariants to study which ones of the known non-isotopic classes of partial Latin squares of order $n\leq 3$ give rise to isotopic classes of partial quasigroup rings over the finite fields $\mathbb{F}_2$ and $\mathbb{F}_3$. In this regard, it is straightforwardly verified that there exists only two one-dimensional partial quasigroup rings: the abelian and that one described by the product $e_1e_1=e_1$. They constitute distinct isotopism classes.

\vspace{0.2cm}

Let $L=(l_{ij})$ be a partial Latin square of order $n$ without empty cells (that is, a {\em Latin square}). McKay et al. \cite{McKay2007} defined the vertex-colored graph $G(L)$ with $n^2+3n$ vertices $\{r_i\mid\, i\leq n\}\cup\{c_i\mid\, i\leq n\}\cup\{s_i\mid\, i\leq n\}\cup \{t_{ij}\mid\, i,j\leq n\}$, where each of the four subsets (related to the rows ($r_i$), columns ($c_i$), symbols ($s_i$) and cells ($t_{ij}$) of the Latin square $L$) has a different color, and $3n^2$ edges $\{r_it_{ij},c_jt_{ij},s_{l_{ij}}t_{ij}\mid\, i,j\leq n\}\}$ (see Figure \ref{Fig_LS}, where we have used distinct styles ($\circ$, $\blacktriangle$, $\blacktriangleright$, $\blacktriangleleft$ and $\bullet$) to represent the colors of the vertices). Two Latin squares $L_1$ and $L_2$ of the same order are isotopic if and only if the graphs $G(L_1)$ and $G(L_2)$ are isomorphic (see Theorem 6 in \cite{McKay2007}).

\begin{figure}[h]
\begin{center}
$\begin{array}{ccc}
\begin{array}{|c|c|}\hline
1 & 2\\ \hline
2 & 1\\ \hline
\end{array} & \equiv & \begin{array}{c} \includegraphics[width=3.5cm]{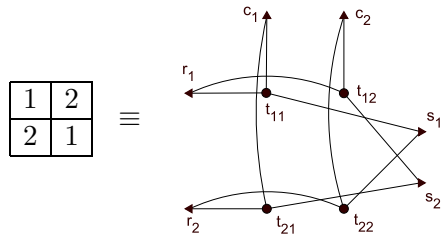}\end{array}\end{array}$
\end{center}
\caption{Graph related to a Latin square of order $2$.}
\label{Fig_LS}
\end{figure}

\subsection{Computational Algebraic Geometry}

Let $\mathbb{K}[X]$ be a multivariate polynomial ring over a field $\mathbb{K}$. The {\em algebraic set} defined by an ideal $I$ of $\mathbb{K}[X]$ is the set $\mathcal{V}(I)$ of common zeros of all the polynomials in $I$. If this set is finite, then the ideal $I$ is {\em zero-dimensional}. This is {\em radical} if every polynomial $f\in \mathbb{K}[X]$ belongs to $I$ whenever there exists a natural number $m$ such that $f^m\in I$. The largest monomial of a polynomial in $I$ with respect to a given monomial term ordering is its {\em leading monomial}. The ideal generated by all the leading monomials of $I$ is its {\em initial ideal}. A {\em standard monomial} of $I$ is any monomial that is not contained in its initial ideal. Regardless of the monomial term ordering, if the ideal $I$ is zero-dimensional and radical, then the number of standard monomials in $I$ coincides with the Krull dimension of the quotient ring $\mathbb{K}[X]/I$ and with the number of points of the algebraic set $\mathcal{V}(I)$. This is computed from the reduced Gr\"obner basis of the ideal. Specifically, a {\em Gr\"obner basis} of the ideal $I$ is any subset $G$ of polynomials in $I$ whose leading monomials generate its initial ideal. This is {\em reduced} if all its polynomials are monic and no monomial of a polynomial in $G$ is generated by the leading monomials of the rest of polynomials in the basis. There exists only one reduced Gr\"obner basis, which can always be computed from Buchberger's algorithm \cite{Buchberger2006}. The computation that is required to this end is extremely sensitive to the number of variables.

\begin{thm}[\cite{Gao2009}, Proposition 4.1.1]\label{Gao} Let $\mathbb{F}_q$ be a finite field, with $q$ a power prime. The complexity time that Buchberger's algorithm requires to compute the reduced Gr\"obner bases of an ideal $\langle\, p_1,\ldots, p_m,p_1^q-p_1,\ldots,p^q_m-p_m\,\rangle$ defined over a polynomial ring $\mathbb{F}_q[x_1,\ldots,x_n]$, where $p_1,\ldots,p_m$ are polynomials given in sparse form and have longest length $l$, is $q^{O(n)}+O(m^2l)$. Here, sparsity refers to the number of monomials.
\end{thm}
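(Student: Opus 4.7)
The plan is to exploit the presence of the field equations, which force every common zero to lie in $\mathbb{F}_q^n$, in order to bound both the dimension of the quotient ring and the size of the intermediate data maintained by Buchberger's algorithm. The first step is to observe that each field equation factors as $x^q - x = \prod_{\alpha\in\mathbb{F}_q}(x - \alpha)$, a separable polynomial; this makes the ideal $I$ zero-dimensional and radical, with $|\mathcal{V}(I)| \leq q^n$. By the standard correspondence between reduced Gr\"obner bases and algebraic sets recalled in the preliminaries, the Krull dimension of $\mathbb{F}_q[x_1,\ldots,x_n]/I$ equals the number of standard monomials of $I$, and every such standard monomial has each variable of degree strictly less than $q$. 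The cardinality of the standard monomial set is therefore at most $q^n$.

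The second step is to normalize all intermediate data. Every polynomial that arises during Buchberger's algorithm can be reduced modulo the field equations without leaving the ideal, so I would assume throughout that each working polynomial is stored as an $\mathbb{F}_q$-linear combination of at most $q^n$ monomials, each of degree less than $q$ in every variable. This normal-form discipline keeps the leading monomials confined to the same $q^n$-dimensional set, so the reduced Gr\"obner basis itself has at most $q^n$ elements.

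Next, I would split the cost analysis into two phases. The preprocessing phase handles the $m$ input generators, which are given in sparse form with at most $l$ monomials each; the $\binom{m}{2}$ initial S-polynomial formations and the companion linear reductions against the given polynomials account for $O(m^2 l)$ operations in $\mathbb{F}_q$, giving the second summand. The main loop, acting entirely within the $q^n$-dimensional space of normal forms, can enlarge the basis at most $q^n$ times, since each new basis element introduces a new leading monomial drawn from that set; each S-polynomial reduction involves at most $q^n$ monomials and terminates in $q^{O(n)}$ field operations, so the total cost of the main loop is $q^{O(n)}$. Adding the two phases yields the claimed bound.

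The main obstacle is to rigorously guarantee that the normal-form discipline is compatible with Buchberger's algorithm: one has to interleave reductions against the field equations with the standard S-polynomial computations without ever creating an exponent $\geq q$ whose subsequent reduction would blow up the intermediate size. Once this scheduling is carefully argued (which is the technical heart of Gao's original proof), the combinatorial ceiling of $q^n$ on the number of distinct normal-form monomials simultaneously controls the basis size, the cost per reduction, and the total number of reductions, and the estimate $q^{O(n)} + O(m^2 l)$ follows by a routine amortized count.
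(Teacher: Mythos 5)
First, a point of reference: the paper offers no proof of this statement at all. It is quoted, with attribution, as Proposition 4.1.1 of \cite{Gao2009}, so there is no in-paper argument to compare yours against; your attempt can only be judged against the standard argument in Gao's work, which your sketch does broadly follow (field equations force the ideal to be zero-dimensional and radical, normal forms live in a $q^n$-dimensional monomial space, the main loop is confined there, and the input is handled separately).

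That said, there are two genuine gaps. The first is that you silently prove a different statement from the one written. The theorem as quoted adjoins $p_i^q-p_i$ for each \emph{generator} $p_i$, not $x_j^q-x_j$ for each \emph{variable}; since $p^q-p=p(p^{q-1}-1)\in\langle p\rangle$, the literal generators are redundant, the ideal is just $\langle p_1,\ldots,p_m\rangle$, and nothing forces it to be zero-dimensional --- your opening claim that ``every common zero lies in $\mathbb{F}_q^n$'' and the entire $q^n$ bound on standard monomials collapse. You are almost certainly working with the intended hypothesis (the variable field equations, which is what Gao actually assumes and what the paper uses in Theorem~2), but a proof must either flag and repair the statement or fail. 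The second gap is the $O(m^2l)$ term. Forming an S-polynomial of two sparse inputs of length $\le l$ costs $O(l)$, but the ``companion linear reductions'' you invoke are full normal-form computations: each can take many reduction steps and inflate the intermediate polynomial up to $q^n$ monomials, so the naive charge for the $\binom{m}{2}$ initial pairs is $O(m^2)\cdot q^{O(n)}$, not $O(m^2l)$. You need an argument that these reductions either remain sparse, are mostly discarded by Buchberger's criteria, or can be amortized into the $q^{O(n)}$ term; you acknowledge this yourself by deferring ``the technical heart of Gao's original proof,'' which is an admission that the scheduling/amortization step --- precisely the step that produces the claimed additive form $q^{O(n)}+O(m^2l)$ --- is missing. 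As it stands the proposal is a plausible outline of the right strategy, not a proof.
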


\vspace{0.2cm}

Gr\"obner bases can be used to determine the isomorphisms and isotopisms between two $n$-dimensional algebras $A$ and $A'$ over a finite field $\mathbb{F}_q$, with $q$ a prime power, respective basis $\{e_1,\ldots,e_n\}$ and $\{e'_1,\ldots,e'_n\}$, and respective structure constants $c_{ij}^k$ and ${c'}_{ij}^k$. To this end, let us define the sets of variables $\mathfrak{F}_n=\{\mathfrak{f}_{ij}\mid\, i,j\leq n\}$, $\mathfrak{G}_n=\{\mathfrak{g}_{ij}\mid\, i,j\leq n\}$ and $\mathfrak{H}_n=\{\mathfrak{h}_{ij}\mid\, i,j\leq n\}$. These variables play the respective role of the entries in the regular matrices related to a possible isotopism $(f,g,h)$ between the algebras $A$ and $A'$. Here, $\alpha(e_i)=\sum_{j=1}^n \alpha_{ij}e'_j$, for each $\alpha\in\{f,g,h\}$. From the coefficients of each basis vector $e_m$ in the expression $f(e_i)g(e_j)=h(e_ie_j)$, we have that
$$\sum_{k,l=1}^n \mathfrak{f}_{ik}\mathfrak{g}_{jl}{c'}_{kl}^m = \sum_{s=1}^n c_{ij}^s\mathfrak{h}_{sm}, \text{ for all } i,j,m\leq n.$$

\begin{thm}\label{thm_CAG_Isom} The next two assertions hold.
\begin{enumerate}[a)]
\item The isotopism group between the algebras $A$ and $A'$ is identified with the algebraic set of the ideal $I^{\mathrm{Isot}}_{A,A'}$ of $\mathbb{F}_q[\mathfrak{F}_n\cup\mathfrak{G}_n\cup\mathfrak{H}_n]$, which is defined as
    {\small
$$\langle\, \sum_{k,l=1}^n \mathfrak{f}_{ik}\mathfrak{g}_{jl}{c'}_{kl}^m - \sum_{s=1}^n c_{ij}^s\mathfrak{h}_{sm}\mid\, i,j,m\leq n\,\rangle + \langle\,\det(M)^{q-1}-1\mid\, M\in\{F,G,H\}\,\rangle,$$}

\noindent where $F$, $G$ and $H$ denote, respectively, the matrices of entries in  $\mathfrak{F}_n$, $\mathfrak{G}_n$ and $\mathfrak{H}_n$. Besides, $|\mathcal{V}(I^{\mathrm{Isot}}_{A,A'})|= \mathrm{dim}_{\mathbb{F}_q} (\mathbb{F}_q[\mathfrak{F}_n\cup\mathfrak{G}_n\cup\mathfrak{H}_n]/ I^{\mathrm{Isot}}_{A,A'})$.
\item The isomorphism group between the algebras $A$ and $A'$ is identified with the algebraic set of the ideal $I^{\mathrm{Isom}}_{A,A'}$ of $\mathbb{F}_q[\mathfrak{F}_n]$, which is defined as
$$\langle\, \sum_{k,l=1}^n \mathfrak{f}_{ik}\mathfrak{f}_{jl}{c'}_{kl}^m - \sum_{s=1}^n c_{ij}^s\mathfrak{f}_{sm}\mid\, i,j,m\leq n \,\rangle + \langle\,\det(F)^{q-1}-1\,\rangle,$$
where $F$ denotes the matrix of entries in $\mathfrak{F}_n$. Besides,
$|\mathcal{V}(I^{\mathrm{Isom}}_{A,A'})|= \mathrm{dim}_{\mathbb{F}_q}(\mathbb{F}_q[\mathfrak{F}_n]/ I^{\mathrm{Isom}}_{A,A'})$.
\end{enumerate}
\end{thm}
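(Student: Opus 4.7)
The plan is to handle both assertions by the same two-step strategy: first establish a bijection between the set of isotopisms (respectively, isomorphisms) and the algebraic set of the proposed ideal, and then invoke a standard result from computational algebraic geometry to convert that bijection into the dimension equality.

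For the bijection in (a), I would encode each linear map $\alpha\in\{f,g,h\}$ by its matrix $M_\alpha=(\alpha_{ij})$ in the fixed bases, writing $\alpha(e_i)=\sum_j \alpha_{ij}e'_j$. Expanding the identity $f(e_i)g(e_j)=h(e_ie_j)$ via the structure constants of $A$ and $A'$ and matching coefficient by coefficient against $\{e'_m\}$ yields exactly the displayed polynomial generators of $I^{\mathrm{Isot}}_{A,A'}$; this derivation is already carried out in the paragraph immediately preceding the theorem. Non-singularity of each $\alpha$ is equivalent to $\det(M_\alpha)\in\mathbb{F}_q^*$, and by Fermat's little theorem this is in turn equivalent to $\det(M_\alpha)^{q-1}-1=0$. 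Hence every isotopism produces a point of $\mathcal{V}(I^{\mathrm{Isot}}_{A,A'})$; conversely, any common zero of the generators provides three non-singular matrices whose associated linear maps satisfy the defining identity on basis vectors, hence on all of $A$ by bilinearity, and so form an isotopism. Part (b) follows by the same argument with $f=g=h$ built in from the start, which collapses the three variable families into $\mathfrak{F}_n$ and the three determinant conditions into a single one.

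The dimension equality $|\mathcal{V}(I)|=\dim_{\mathbb{F}_q}\mathbb{F}_q[X]/I$ rests on the classical fact that, for a zero-dimensional radical ideal over a field, the cardinality of the variety equals the dimension of the quotient ring. Zero-dimensionality of each of our two ideals follows because the solution sets lie in $\mathbb{F}_q$ (which, if needed, can be enforced by adjoining the field equations $\mathfrak{x}^q-\mathfrak{x}$, consistent with the framework used in Theorem~\ref{Gao}), and radicality is a consequence of the separability of those field equations. Applying this result to each ideal yields the claimed equalities. I expect the principal technical subtlety to lie precisely in this dimension step: the bijection itself is essentially a bookkeeping exercise in unpacking definitions, whereas moving cleanly from a set-theoretic count of isotopisms to the $\mathbb{F}_q$-dimension of the quotient ring requires the zero-dimensionality and radicality hypotheses to be handled with care.
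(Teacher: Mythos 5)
Your proposal is correct and follows essentially the same route as the paper's proof: the generators are matched to the coefficient identities derived just before the theorem, the determinant conditions encode non-singularity via Fermat's little theorem, and the cardinality formula is obtained from the ideal being zero-dimensional (finite variety in $\mathbb{F}_q^{n^2}$, resp.\ $\mathbb{F}_q^{3n^2}$) and radical because each variable satisfies the square-free field equation $\mathfrak{x}^q-\mathfrak{x}$ (Proposition 2.7 of the reference on algebraic geometry cited in the paper). Your remark that the field equations may need to be explicitly adjoined is, if anything, slightly more careful than the paper's claim that they are ``intrinsically included.''
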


\begin{proof} Let us prove the second assertion, being analogous the reasoning for assertion (a). The generators of the ideal $I^{\mathrm{Isom}}_{A,A'}$ involve each zero $(f_{11},\ldots,$ $f_{nn})$ of its algebraic set to constitute the entries of the regular matrix of an isomorphism $f$ between the algebras $A$ and $A'$. The result follows from the fact of being this ideal zero-dimensional and radical. Particularly, the ideal $I^{\mathrm{Isom}}_{A,A'}$ is zero-dimensional because its algebraic set is a finite subset of $\mathbb{F}_q^{n^2}$. Besides, from Proposition 2.7 of \cite{Cox1998}, the ideal $I$ is also radical, because, for each $i,j\leq n$, the unique monic generator of $I\cap \mathbb{F}_q[\mathfrak{f}_{ij}]$ is the polynomial $(\mathfrak{f}_{ij})^q-\mathfrak{f}_{ij}$, which is intrinsically included in each ideal of $\mathbb{F}_q[\mathfrak{F}_n]$ and is square-free.
\end{proof}

\vspace{0.2cm}

\begin{corollary}\label{coro_CAG_Isom} The complexity times that Buchberger's algorithm requires to compute the reduced Gr\"obner bases of the ideals $I^{\mathrm{Isot}}_{A,A'}$ and $I^{\mathrm{Isom}}_{A,A'}$ in Theorem \ref{thm_CAG_Isom} are, respectively, $q^{O(3n^2)}+O(n^6n!)$ and $q^{O(n^2)}+O(n^6n!)$.
\end{corollary}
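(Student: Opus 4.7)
My plan is to apply Theorem \ref{Gao} directly to each of the two ideals of Theorem \ref{thm_CAG_Isom}, so the work reduces to identifying, in each case, the number of variables $n_{\mathrm{var}}$, the number of generators $m$ in the hypothesis form, and the maximum sparsity $l$, and then substituting them into the bound $q^{O(n_{\mathrm{var}})}+O(m^{2}l)$.

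For $I^{\mathrm{Isom}}_{A,A'}\subset\mathbb{F}_{q}[\mathfrak{F}_{n}]$ I would first observe that $|\mathfrak{F}_{n}|=n^{2}$, which immediately yields the factor $q^{O(n^{2})}$. Next, I would count the generators: the structure-constant relations contribute one polynomial per triple $(i,j,m)\in[n]^{3}$, for a total of $n^{3}$, each of sparsity at most $n^{2}+n$. For the regularity condition I would rewrite $\det(F)^{q-1}-1$ in an equivalent sparse form---concretely, by adjoining one auxiliary indeterminate $d$ and imposing $d\cdot\det(F)-1=0$, a polynomial with exactly $n!+1$ monomials whose vanishing forces $\det(F)$ to be a unit and hence does not alter the algebraic set. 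With this reformulation one gets $m=O(n^{3})$ and $l=O(n!)$, so $m^{2}l=O(n^{6}n!)$, and the bound $q^{O(n^{2})}+O(n^{6}n!)$ follows.

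For $I^{\mathrm{Isot}}_{A,A'}\subset\mathbb{F}_{q}[\mathfrak{F}_{n}\cup\mathfrak{G}_{n}\cup\mathfrak{H}_{n}]$ I would repeat the same analysis with three differences: the ambient ring has $3n^{2}$ indeterminates (yielding $q^{O(3n^{2})}$); the structure-constant system still gives $n^{3}$ generators; and the three regularity conditions, one for each of $F$, $G$ and $H$, are handled by introducing three auxiliary variables $d_{F},d_{G},d_{H}$ with generators $d_{M}\cdot\det(M)-1$ of length $n!+1$. The total generator count is still $O(n^{3})$ and the longest length is still $O(n!)$, so $m^{2}l=O(n^{6}n!)$ and the bound $q^{O(3n^{2})}+O(n^{6}n!)$ follows.

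I expect the main obstacle to be the determinant step: read literally, $\det(M)^{q-1}-1$ is not $O(n!)$-sparse, because raising a polynomial with $n!$ monomials to the $(q-1)$-th power can create far more than $n!$ terms, which would leak a $q$-dependence into $l$. The auxiliary-variable reformulation (or any equivalent device that keeps the determinant linear in its newly introduced reciprocal and preserves the algebraic set) is the key step that isolates the factor $n!$ in the final $O(n^{6}n!)$ bound, with everything else being a routine substitution into Theorem \ref{Gao}.
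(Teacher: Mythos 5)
Your proposal follows essentially the same route as the paper's proof: both consist of reading off the parameters of Theorem \ref{Gao} for each ideal ($n^2$, respectively $3n^2$, variables; $n^3$ structure-constant generators of sparsity at most $n^2+n$ plus the regularity generators; longest length $O(n!)$) and substituting them into $q^{O(n_{\mathrm{var}})}+O(m^2l)$ to get $m^2l=O(n^6n!)$. The one point where you genuinely diverge is the obstacle you flag at the end: the paper simply asserts that the maximal length of the generators is $n!$, which is correct for $\det(F)$ itself (and for $\det(F)^{q-1}-1$ when $q=2$) but not, read literally, for $\det(M)^{q-1}-1$ in general, since expanding the $(q-1)$-st power can produce far more than $n!$ monomials and would make $l$ depend on $q$. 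Your auxiliary-variable reformulation $d_M\cdot\det(M)-1=0$, which cuts out exactly the matrices with $\det(M)\neq 0$, adds only $O(1)$ variables (so $q^{O(n^2)}$ and $q^{O(3n^2)}$ are unaffected), and keeps every generator $O(n!)$-sparse, supplies a justification that the paper omits; in that respect your write-up is the more careful of the two, while the rest of the argument coincides with the paper's.
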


\begin{proof} We prove the result for the second ideal, being analogous the reasoning for the first one. The result follows straightforwardly from Theorem \ref{Gao} once we observe that all the generators of the ideal in Theorem \ref{thm_CAG_Isom} are sparse in $\mathbb{F}_q[\mathfrak{F}_n]$. More specifically, the number of variables is $n^2$, the number of generators of the ideal under consideration that are not of the form $(\mathfrak{f}_{ij})^q-\mathfrak{f}_{ij}$ is $n^3+1$ and the maximal length of these generators is $n!$.
\end{proof}

\vspace{0.2cm}

Theorem \ref{thm_CAG_Isom} has been implemented as a procedure called {\em isoAlg} in the open computer algebra system for polynomial computations {\sc Singular} \cite{Decker2016}. This has been included in the library {\em GraphAlg.lib}, which is available online at {\texttt{http://personales.us.es/raufalgan/LS/GraphAlg.lib}}. Let us illustrate the use of this procedure with an example related to the distribution of the set $\mathcal{P}_2(\mathbb{F}_2)$ of two-dimensional partial quasigroup rings over the finite field $\mathbb{F}_2$ into isotopism and isomorphism classes. All the computations that are exposed throughout this paper are implemented in a system with an {\em Intel Core i7-2600, with a 3.4 GHz processor and 16 GB of RAM}.

\begin{example}\label{ejemplo_PQ2} Let us consider the pair of partial quasigroup rings in $\mathcal{P}_2(\mathbb{F}_2)$ that are respectively related to the partial Latin squares
$$\begin{array}{|c|c|} \hline 1 & 2\\ \hline
2 & \ \\ \hline \end{array}  \hspace{0.25cm} \text { and } \hspace{0.25cm} \begin{array}{|c|c|} \hline  1 & 2\\ \hline
2 & 1 \\ \hline \end{array}$$
These two partial Latin squares are not isotopic because isotopisms preserve the number of filled cells. Nevertheless, their related partial quasigroup rings over $\mathbb{F}_2$, with respective bases $\{e_1,e_2\}$ and $\{e'_1,e'_2\}$, and which are respectively described by the products
$$\begin{cases}e_1e_1=e_1,\\
e_1e_2=e_2=e_2e_1.
\end{cases} \hspace{0.5cm}  \text{ and } \hspace{0.5cm} \begin{cases}e'_1e'_1=e'_1=e'_2e'_2,\\
e'_1e'_2=e'_2=e'_2e'_1.
\end{cases}$$
are isotopic. Specifically, by implementing the procedure {\em isoAlg}, our system computes in $0$ seconds the existence of four isotopisms between these two partial quasigroup rings. One of this isotopisms is, for instance, the isomorphism $f$ such that $f(e_1)=e'_1$ and $f(e_2)=e'_1+e'_2$. The procedure {\em isoAlg} also ensures us the existence of $f$ as the unique possible isomorphism. \hfill $\lhd$
\end{example}

\vspace{0.2cm}

In practice, in those cases in which the run time required for the computations involved in Theorem \ref{thm_CAG_Isom} becomes excessive, it is recommendable to eliminate the generators of the corresponding ideal that are referred to the determinants of the matrices $F$, $G$ and $H$. This reduces the complexity time in Corollary \ref{coro_CAG_Isom} to $q^{O(3n^2)}  + O(n^8)$ and $q^{O(n^2)} + O(n^8)$, respectively, and gives enough information to analyze a case study on which base the possible isomorphisms and isotopisms between two given algebras, whatever the base field is. The next example illustrates this fact by focusing on the possible isotopisms that there exist over any field between the two partial quasigroup rings that appear in Example \ref{ejemplo_PQ2}.

\begin{example}\label{ejemplo_PQ2a} The implementation of the procedure {\em isoAlg} enables us to ensure that, whatever the base field is, the reduced Gr\"obner basis of the ideal $I^{\mathrm{Isot}}_{A,A'}$ in Theorem \ref{thm_CAG_Isom} related to the isotopism group between the two partial quasigroup rings of Example \ref{ejemplo_PQ2} holds that
$2\mathfrak{h}_{22}^3=0$ and $\mathfrak{h}_{21}^2+\mathfrak{h}_{22}^2=0$. If the characteristic of the base field is not two, then $\mathfrak{h}_{21}=\mathfrak{h}_{22}=0$. This involves $H$ to be singular and hence, these two partial quasigroup rings are not isotopic. Otherwise, it is straightforwardly verified that the linear transformation $f$ that is indicated in Example \ref{ejemplo_PQ2} constitutes an isomorphism between both rings for
every base field of characteristic two. \hfill $\lhd$
\end{example}

\section{Description of faithful functors between algebras and graphs}

Based on the proposal of McKay et al. \cite{McKay2007} for Latin squares, we describe now a pair of graphs that are uniquely related to a finite-dimensional algebra $A$ over a finite field $\mathbb{K}$. Firstly, we define the vertex-colored graph $G_1(A)$ with four maximal monochromatic subsets $R_{A}=\{r_u\mid\, u\in A\setminus \mathrm{Ann}_{A^-}(A)\}$, $C_{A}=\{c_u\mid\, u\in A\setminus \mathrm{Ann}_{A^+}(A)\}$, $S_{A}=\{s_u\mid\, u\in A^2\setminus \{0\}\}$ and $T_{A}=\{t_{u,v}\mid\, u,v\in A, uv\neq 0\}$, and edges $\{r_ut_{u,v}, c_vt_{u,v}, s_{uv}t_{u,v}\mid u,v\in A, uv\neq 0\}$. From this graph we also define the vertex-colored graph $G_2(A)$ by adding the edges $\{r_uc_u,\mid\, u\in A\setminus \mathrm{Ann}_A(A)\}\, \cup \{c_us_u\mid\, u \in A^2\setminus \mathrm{Ann}_{A^+}(A)\} \, \cup \{r_us_u\mid\, u \in A^2\setminus \mathrm{Ann}_{A^-}(A)\}$. As an illustrative example, Figure \ref{Fig_1} shows the two graphs that are related to any $n$-dimensional algebra over the finite field $\mathbb{F}_2$, with basis $\{e_1,\ldots,e_n\}$, that is described as $e_1e_2=e_2e_1=e_1$.

\begin{figure}[h]
\begin{center}
$\begin{array}{c|c}
G_1 & G_2 \\ \hline
\includegraphics[width=6cm]{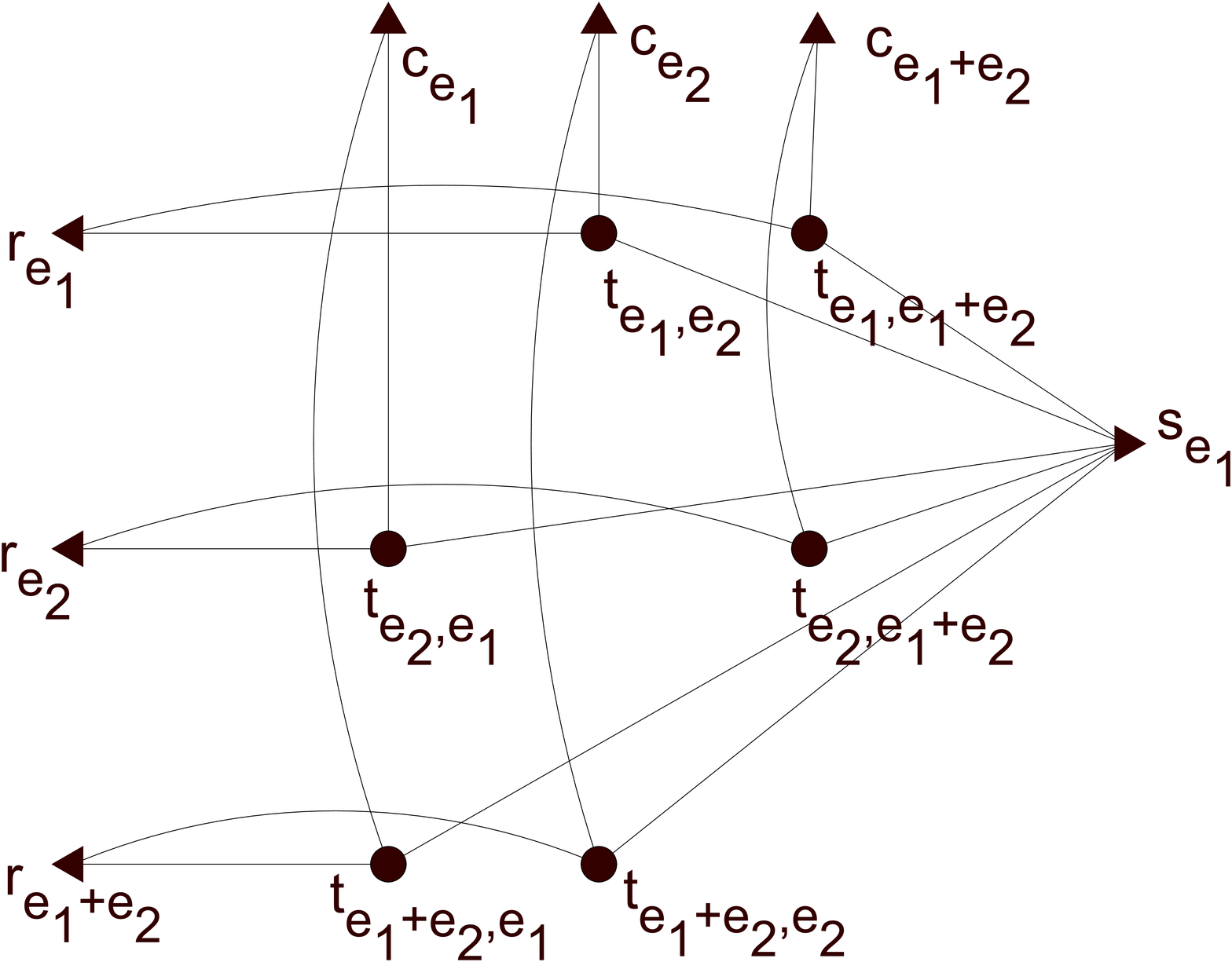} & \includegraphics[width=6cm]{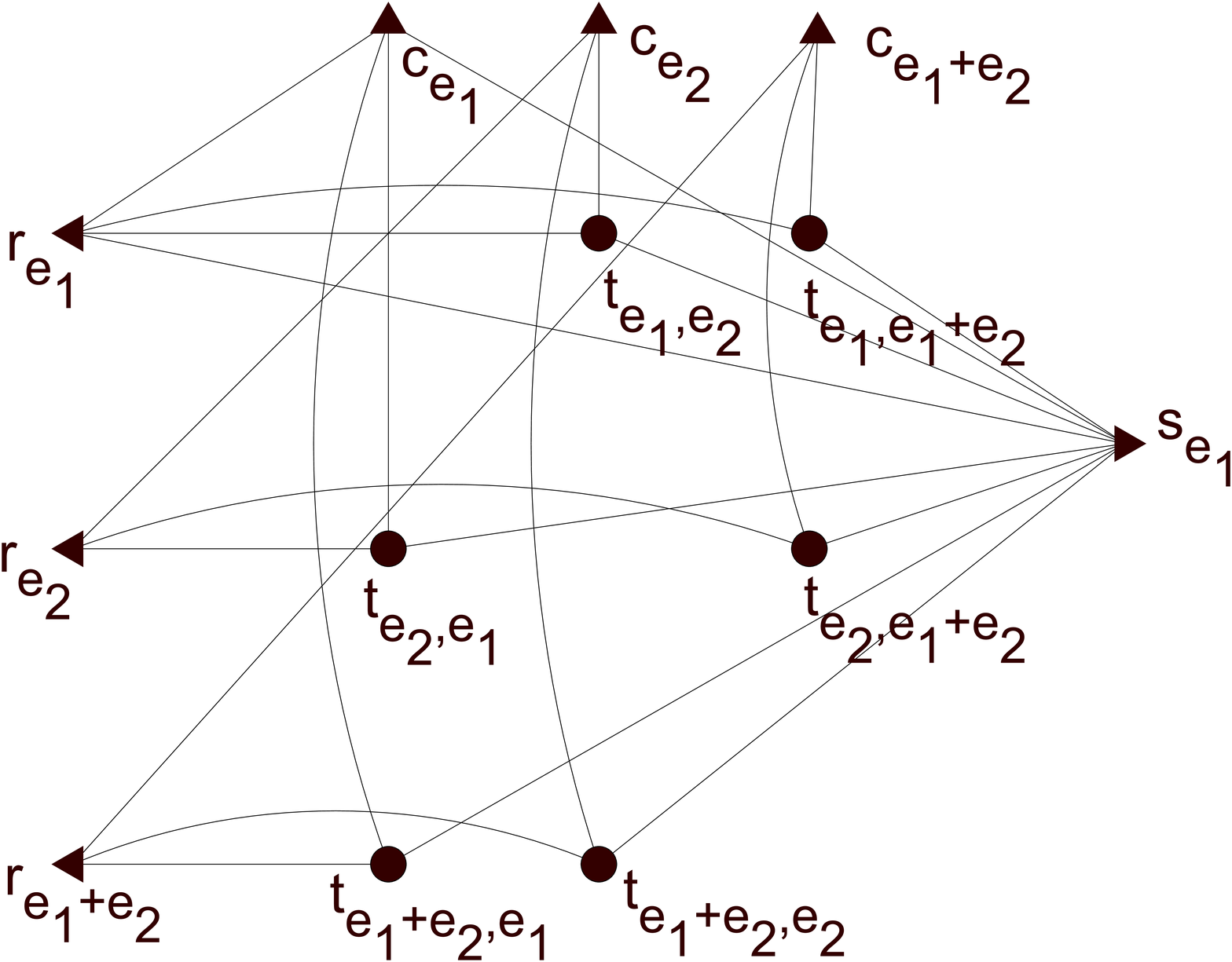}
\end{array}$
\end{center}
\caption{Graphs related to the algebra $e_1e_2=e_2e_1=e_1$ over $\mathbb{F}_2$.}
\label{Fig_1}
\end{figure}

\begin{lemm}\label{lemm_graph0} The next assertions hold.
\begin{enumerate}[a)]
\item If the algebra $A$ is abelian, then $G_1(A)$ and $G_2(A)$ have no vertices.
\item The graph $G_1(A)$ does not contain triangles.
\item In both graphs $G_1(A)$ and $G_2(A)$,
\begin{itemize}
\item The number of vertices is
{\small \[|A\setminus \mathrm{Ann}_{A^-}(A)|+|A\setminus \mathrm{Ann}_{A^+}(A)|+
|A^2|+|\{(u,v)\in A\times A\mid\, uv\neq 0\}| - 1.
\]}
\item The degree of the vertex $t_{u,v}$ is
\[d(t_{u,v})=3, \text{ for all } u,v\in A \text{ such that } uv\neq 0.\]
\end{itemize}
\item In the graph $G_1(A)$,
\begin{itemize}
\item $d(r_u)=|A\setminus \mathrm{Ann}_{A^+}(\{u\})|$, for all $u\not\in \mathrm{Ann}_{A^-}(A)$.
\item $d(c_u)=|A\setminus \mathrm{Ann}_{A^-}(\{u\})|$, for all $u\not\in \mathrm{Ann}_{A^+}(A)$.
\item $d(s_u)=\sum_{v\in A}|\mathrm{ad}^{-1}_v(u)|$, for all $u\in A^2\setminus\{0\}$, where $\mathrm{ad}_v:A\rightarrow A^2$ is the {\em adjoint action} of $v$ in $A$ such that $\mathrm{ad}_v(w)=vw$, for all $w\in A$.
\end{itemize}
\item Let $\mathbf{1}$ denotes the characteristic function. Then, in the graph $G_2(A)$,
\begin{itemize}
\item $d(r_u)=|A\setminus \mathrm{Ann}_{A^+}(\{u\})|+ \mathbf{1}_{A\setminus\mathrm{Ann}_{A^+}(A)}(u) + \mathbf{1}_{A^2}(u)$, for all $u\not\in \mathrm{Ann}_{A^-}(A)$.
\item $d(c_u)=|A\setminus \mathrm{Ann}_{A^-}(\{u\})|+     \mathbf{1}_{A\setminus\mathrm{Ann}_{A^-}(A)}(u) + \mathbf{1}_{A^2}(u)$, for all $u\not\in \mathrm{Ann}_{A^+}(A)$.
\item $d(s_u)=\mathbf{1}_{A\setminus\mathrm{Ann}_{A^-}(A)}(u)+ \mathbf{1}_{A\setminus\mathrm{Ann}_{A^+}(A)}(u)+\sum_{v\in A}|\mathrm{ad}^{-1}_v(u)|$, for all $u\in A^2\setminus\{0\}$.
\end{itemize}
\end{enumerate}
\end{lemm}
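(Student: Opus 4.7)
The plan is to verify each of the five items directly from the definitions of the color classes and edges of $G_1(A)$ and $G_2(A)$; no new machinery is needed beyond careful counting and the observation that $G_1(A)$ is bipartite. Assertion (a) is immediate: for an abelian algebra every product vanishes, so $\mathrm{Ann}_{A^-}(A)=\mathrm{Ann}_{A^+}(A)=A$ and $A^2=\{0\}$, making each of the four monochromatic classes $R_A$, $C_A$, $S_A$, $T_A$ empty. For (b), every edge of $G_1(A)$ is of the form $r_ut_{u,v}$, $c_vt_{u,v}$ or $s_{uv}t_{u,v}$, so each edge has exactly one endpoint in $T_A$ and the other in $R_A\cup C_A\cup S_A$; hence $G_1(A)$ is bipartite and contains no triangle.

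For (c), the vertex set is the disjoint union of the four color classes, whose cardinalities can be read off the definition and summed to give the stated expression (using $|S_A|=|A^2|-1$). The degree of $t_{u,v}$ is $3$ in either graph because $t_{u,v}$ is joined to exactly the three pairwise distinct vertices $r_u$, $c_v$, $s_{uv}$, while the extra edges in $G_2(A)$ lie entirely inside $R_A\cup C_A\cup S_A$. For (d), the neighbors of $r_u$ in $G_1(A)$ are the vertices $t_{u,v}$ with $uv\neq 0$, and the set of such $v$ equals $A\setminus\mathrm{Ann}_{A^+}(\{u\})$; an analogous argument gives $d(c_u)$, and for $d(s_u)$ I would count pairs $(v,w)\in A\times A$ with $vw=u$ by fixing $v$ and summing $|\mathrm{ad}_v^{-1}(u)|$ over $v\in A$.

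Finally, for (e), the degrees in $G_1(A)$ are augmented by the three new families of edges $r_uc_u$, $r_us_u$ and $c_us_u$. The subtle point, and the only place where one must be careful, is that each such edge is present only when both of its endpoints are actual vertices of the graph: for instance, $r_uc_u$ contributes to $d(r_u)$ precisely when $c_u\in C_A$, i.e.\ when $u\notin \mathrm{Ann}_{A^+}(A)$, which is what $\mathbf{1}_{A\setminus\mathrm{Ann}_{A^+}(A)}(u)$ records, and $r_us_u$ contributes exactly when $s_u\in S_A$, which, given that $r_u$ already exists so $u\neq 0$, is equivalent to $u\in A^2$. The remaining two degree formulas follow by the same bookkeeping. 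The main obstacle throughout is not conceptual but notational: tracking which annihilator sets each $u$ belongs to so that the indicators line up with the existence of the relevant far endpoint.
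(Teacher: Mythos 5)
Your proof is correct and follows the same route as the paper, which simply states that the lemma ``follows straightforwardly from the definition of the graphs $G_1(A)$ and $G_2(A)$''; you have merely written out the direct verification the authors leave implicit. Your attention to the indicator terms in part (e) --- an edge such as $r_uc_u$ only counts when the far endpoint is actually a vertex --- is exactly the right reading of the degree formulas.
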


\begin{proof} The result follows straightforwardly from the definition of the graphs $G_1(A)$ and $G_2(A)$.
\end{proof}

\vspace{0.2cm}

\begin{prp}\label{prop_graph0_a} The next assertions hold.
\begin{enumerate}[a)]
\item The number of edges of the graph $G_1(A)$ is
$\sum_{u\not\in\mathrm{Ann}_{A^-}(A)} |A\setminus \mathrm{Ann}_{A^+}(\{u\})|$ $+\sum_{u\not\in\mathrm{Ann}_{A^+}(A)}|A\setminus \mathrm{Ann}_{A^-}(\{u\})|+\sum_{u\in A^2\setminus\{0\}}\sum_{v\in A}|\mathrm{ad}_v^{-1}(u)|$.
\item The number of edges of the graph $G_2(A)$ coincides with those of $G_1(A)$ plus $|A\setminus\mathrm{Ann}_A(A)| + |A^2\setminus \mathrm{Ann}_{A^-}(A)| + |A^2\setminus\mathrm{Ann}_{A^+}(A)|$.
\end{enumerate}
\end{prp}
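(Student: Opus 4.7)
The plan is to count the edges of $G_1(A)$ directly by splitting them into three families according to which of $r_\cdot$, $c_\cdot$, $s_\cdot$ the edge touches, and then add on the three extra families that distinguish $G_2(A)$ from $G_1(A)$.

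For part (a), by construction the edge set of $G_1(A)$ is the disjoint union
\[
\{r_u t_{u,v}\}\ \sqcup\ \{c_v t_{u,v}\}\ \sqcup\ \{s_{uv} t_{u,v}\},
\]
each indexed by the pairs $(u,v)\in A\times A$ with $uv\neq 0$; disjointness is automatic since $r$-, $c$-, and $s$-vertices lie in distinct color classes. For the first family I fix $u$ and count the $v$'s with $uv\neq 0$: by definition of the right annihilator this equals $|A\setminus\mathrm{Ann}_{A^+}(\{u\})|$, and the vertex $r_u$ is only present when $u\notin\mathrm{Ann}_{A^-}(A)$. Summing over such $u$ gives the first term. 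The second family is handled symmetrically (swap the roles of $u$ and $v$, and of the left and right annihilators), giving the second term. For the third family I fix $w\in A^2\setminus\{0\}$ and group the pairs $(u,v)$ with $uv=w$ by the left factor: using the convention $\mathrm{ad}_v(x)=vx$ from Lemma \ref{lemm_graph0}, the number of such pairs is $\sum_{v\in A}|\mathrm{ad}_v^{-1}(w)|$. Summing over $w\in A^2\setminus\{0\}$ yields the third term.

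Part (b) then follows immediately. The edges that $G_2(A)$ adds to $G_1(A)$ never touch a $T_A$-vertex, so they are disjoint from the edges counted in (a); and since they connect distinct pairs of the three color classes $R_A$, $C_A$, $S_A$, the three new families are pairwise disjoint as well. Each is in bijection with its indexing set, of respective cardinalities $|A\setminus\mathrm{Ann}_A(A)|$, $|A^2\setminus\mathrm{Ann}_{A^+}(A)|$, and $|A^2\setminus\mathrm{Ann}_{A^-}(A)|$. Adding these three numbers to the count from (a) gives the claimed total.

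The argument is essentially bookkeeping; the only mildly delicate point is keeping the left/right annihilator conventions aligned with the factor order in the edges, and recognizing that the adjoint-sum expression in the third term of (a) is exactly the count of ordered factorizations of a given $w\in A^2\setminus\{0\}$. An alternative derivation via the handshaking lemma and the degree formulas of Lemma \ref{lemm_graph0} would also work, but direct enumeration is cleaner and avoids dividing by two.
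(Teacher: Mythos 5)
Your proof is correct, and it reaches the stated formulas by a slightly different route than the paper. The paper invokes the handshaking lemma (the number of edges is half the sum of the degrees), observes that each $t_{u,v}$ contributes degree $3$ exactly through its neighbours $r_u$, $c_v$, $s_{uv}$ so that the degree sum over $T_A$ equals the degree sum over $R_A\cup C_A\cup S_A$, and then substitutes the degree formulas already recorded in parts (d) and (e) of Lemma \ref{lemm_graph0}. You instead partition the edge set of $G_1(A)$ into the three families indexed by pairs $(u,v)$ with $uv\neq 0$ and count each family directly, re-deriving on the spot the quantities $|A\setminus\mathrm{Ann}_{A^+}(\{u\})|$, $|A\setminus\mathrm{Ann}_{A^-}(\{v\})|$ and $\sum_{v}|\mathrm{ad}_v^{-1}(w)|$ that the paper imports from Lemma \ref{lemm_graph0}; your treatment of part (b) as three further pairwise disjoint families, disjoint from everything touching $T_A$, matches the paper's bookkeeping. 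The two arguments count the same incidences; yours is self-contained and avoids the factor-of-two step, while the paper's reuses the degree computations it has already stated and so is shorter on the page. One point worth being aware of (it affects the paper's proof equally): the added edges $r_uc_u$ for $u\in A\setminus\mathrm{Ann}_A(A)$ are counted as one per index $u$ even when, say, $u\in\mathrm{Ann}_{A^-}(A)\setminus\mathrm{Ann}_{A^+}(A)$, in which case $r_u$ is not listed among the vertices of $G_1(A)$; both you and the paper read the definition of $G_2(A)$ at face value here, so this is a quirk of the construction rather than a gap in your argument.
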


\begin{proof} The result follows from the first theorem of Graph Theory \cite{Harary1969}, which enables us to ensure that the number of edges of a graph is the half of the summation of degrees of its vertices. Now, for each pair of vectors $u,v\in A$ such that $uv\neq 0$, the vertex $t_{u,v}\in T_A$ is the only vertex in $T_A$ that is adjacent to the vertices $r_u\in R_A$, $c_v\in C_A$ and $s_{uv}\in S_A$. They constitute indeed the three vertices related to the degree of $t_{u,v}$ that is indicated in assertion (c) of Lemma \ref{lemm_graph0}. As a consequence, the summation of degrees of all the vertices in $T_A$ coincides with $\sum_{u\in R_A}d(r_u) + \sum_{u\in C_A}d(c_u) + \sum_{u\in S_A}d(s_u)$. The result follows then from assertions (d) and (e) in Lemma \ref{lemm_graph0}.
\end{proof}

\vspace{0.2cm}

\begin{thm}\label{theo_graph0} Let $A$ and $A'$ be two finite-dimensional algebras over a finite field $\mathbb{K}$. Then,
\begin{enumerate}[a)]
\item If both algebras are isotopic, then their corresponding graphs $G_1(A)$ and $G_1(A')$ are isomorphic. Reciprocally, if the graphs $G_1(A)$ and $G_1(A')$ are isomorphic, then there exist three bijective maps $f$, $g$ and $h$ between $A$ and $A'$ such that $f(u)g(v)=h(uv)$.
\item If both algebras are isomorphic, then their corresponding graphs $G_2(A)$ and $G_2(A')$ are also isomorphic. Reciprocally, if the graphs $G_2(A)$ and $G_2(A')$ are isomorphic, then there exists a multiplicative bijective map between the algebras $A$ and $A'$, that is, a bijective map $f:A\rightarrow A'$ so that $f(u)f(v)=f(uv)$, for all $u,v\in A$.
\end{enumerate}
\end{thm}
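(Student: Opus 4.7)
\emph{Forward direction of (a).} Given an isotopism $(f,g,h)\colon A\to A'$, the plan is to define a vertex map $\phi$ on $G_1(A)$ by $\phi(r_u)=r_{f(u)}$, $\phi(c_u)=c_{g(u)}$, $\phi(s_u)=s_{h(u)}$ and $\phi(t_{u,v})=t_{f(u),g(v)}$. Well-definedness on the first three color classes is exactly what Lemmas \ref{lemm_annihilator} and \ref{lemm_M1} provide: $f$ carries $\mathrm{Ann}_{A^-}(A)$ onto $\mathrm{Ann}_{A'^-}(A')$, $g$ does the analogous thing for the right annihilator, and $h$ bijects $A^2\setminus\{0\}$ onto $A'^2\setminus\{0\}$. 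For $T_A$, the isotopism identity $f(u)g(v)=h(uv)$ combined with bijectivity of $h$ yields $uv\neq 0\Leftrightarrow f(u)g(v)\neq 0$. Color preservation is built in, and each of the three edge families $r_ut_{u,v}$, $c_vt_{u,v}$, $s_{uv}t_{u,v}$ is sent to an edge of $G_1(A')$ directly from the isotopism relation.

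\emph{Reverse direction of (a).} A color-preserving graph isomorphism $\phi$ restricts to four bijections between the matching color classes, from which I would read off partial maps $f$, $g$ and $h$ on $A\setminus\mathrm{Ann}_{A^-}(A)$, $A\setminus\mathrm{Ann}_{A^+}(A)$ and $A^2\setminus\{0\}$. I would then extend each to a bijection $A\to A'$ sending the remaining annihilator part to its counterpart and $0\mapsto 0$, where the color-class cardinality equalities guarantee the extensions exist. For any $(u,v)$ with $uv\neq 0$, inspecting the three edges meeting $t_{u,v}$ forces $\phi(t_{u,v})=t_{f(u),g(v)}$ and hence $f(u)g(v)=h(uv)$. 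For pairs with $uv=0$ the identity reduces to $f(u)g(v)=0$, which I would prove by contradiction: if $f(u)g(v)\neq 0$, then $t_{f(u),g(v)}\in T_{A'}$ is the $\phi$-image of a unique $t_{u',v'}\in T_A$, and chasing the edges $r_{u'}t_{u',v'}$ and $c_{v'}t_{u',v'}$ back through $\phi$ forces $u'=u$ and $v'=v$, contradicting $uv=0$.

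\emph{Part (b) and main obstacle.} The forward direction is almost free: an isomorphism $f$ is an isotopism $(f,f,f)$, so the $\phi$ of (a) already exists, and the additional edges $r_uc_u$, $r_us_u$, $c_us_u$ of $G_2(A)$ are respected precisely because all three component maps coincide. For the reverse, a graph isomorphism $G_2(A)\cong G_2(A')$ restricts to $G_1(A)\cong G_1(A')$, yielding bijections $(f,g,h)$ by (a). The new edge types then fuse them into a single map: the only edges between $R_{A'}$ and $C_{A'}$ in $G_2(A')$ are of the form $r_wc_w$, so the $\phi$-image of $r_uc_u$ must have equal subscripts, forcing $f(u)=g(u)$; the analogous reasoning with the edges $r_us_u$ and $c_us_u$ gives $f(u)=h(u)$ and $g(u)=h(u)$ on their domains. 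Setting $f=g=h$ globally (extending freely on the remaining vectors), the relation $f(u)g(v)=h(uv)$ becomes the required multiplicativity $f(u)f(v)=f(uv)$. I expect the subtlest step throughout to be the contrapositive argument for the reverse direction of (a) in the case $uv=0$; everything else amounts to bookkeeping with the annihilator lemmas.
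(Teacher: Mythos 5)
Your proof follows essentially the same route as the paper's: the same vertex map $(r_u,c_u,s_u,t_{u,v})\mapsto(r_{f(u)},c_{g(u)},s_{h(u)},t_{f(u),g(v)})$ in the forward direction, and in the reverse direction the same reading-off of $f$, $g$, $h$ from the color-class bijections followed by edge-chasing at $t_{u,v}$, with the extra edges of $G_2$ forcing $f=g=h$. If anything, your choice to extend by an arbitrary bijection carrying each annihilator onto its counterpart (rather than the paper's fixed coordinate map $\pi$) and your explicit contradiction argument in the $uv=0$ case are slightly more careful renderings of the same steps.
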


\begin{proof} Let $(f,g,h)$ be an isotopism between the algebras $A$ and $A'$. We define the map $\alpha$ between $G_1(A)$ and $G_1(A')$ such that
$$\begin{cases}\alpha(r_u)=r_{f(u)}, \text{ for all } u\in  A\setminus \mathrm{Ann}_{A^-}(A),\\
\alpha(c_u)=c_{g(u)}, \text{ for all } u\in  A\setminus \mathrm{Ann}_{A^+}(A),\\
\alpha(s_u)=s_{h(u)}, \text{ for all } u\in  A^2\setminus \{0\},\\
\alpha(t_{u,v})=t_{f(u),g(v)}, \text{ for all } u,v\in A \text { such that } uv\neq 0.\end{cases}$$

The description of $G_1(A)$ and $G_1(A')$, together with Lemmas \ref{lemm_annihilator} and \ref{lemm_M1}, and the regularity of $f$, $g$ and $h$, involves $\alpha$ to be an isomorphism between these two graphs. The same map $\alpha$ constitutes an isomorphism between the graphs $G_2(A)$ and $G_2(A')$ in case of being $f=g=h$, that is, if the algebras $A$ and $A'$ are isomorphic. Reciprocally, let $\alpha$ be an isomorphism between the graphs $G_1(A)$ and $G_1(A')$. Collinearity involves this isomorphism to be uniquely determined by its restriction to $R_A\cup C_A\cup S_A$. Specifically, the image of each vertex $t_{u,v}\in T_A$ by means of $\alpha$ is uniquely determined by the corresponding images of $r_u$, $c_v$ and $s_{uv}$. Let $\beta$ and $\beta'$ be the respective bases of the algebras $A$ and $A'$ and let $\pi: A\rightarrow A'$ be the natural map that preserves the components of each vector with respect to the mentioned bases. That is, $\pi((u_1,\ldots,u_n)_{\beta})=(u_1,\ldots,u_n)_{\beta'}$, for all $u_1,\ldots,u_n\in\mathbb{K}$. Let us define three maps $f$, $g$ and $h$ from $A$ to $A'$ such that
$$f(u)=\begin{cases}\pi(u), \text{ for all } u\in \mathrm{Ann}_{A^-}(A),\\
v, \text{ otherwise, where } v\in A \text{ is such that } \alpha(r_u)=r_v.
\end{cases}$$
$$g(u)=\begin{cases}\pi(u), \text{ for all } u\in \mathrm{Ann}_{A^+}(A),\\
v, \text{ otherwise, where } v\in A \text{ is such that } \alpha(c_u)=c_v.
\end{cases}$$
$$h(u)=\begin{cases}\pi(u), \text{ for all } u\in (A\setminus A^2)\cup \{0\},\\
v, \text{ otherwise, where } v\in A \text{ is such that } \alpha(s_u)=s_v.
\end{cases}$$

\vspace{0.1cm}

From Lemmas \ref{lemm_annihilator} and \ref{lemm_M1}, these three maps are bijective. Let $u,v\in A$. If $u\in \mathrm{Ann}_{A^-}(A)$ or $v\in \mathrm{Ann}_{A^+}(A)$, then there does not exist the vertex $t_{u,v}$ in the graph $G_1(A)$. Since $\alpha$ preserves collinearity, there does not exist the vertex $t_{f(u),g(v)}$ in the graph $G_1(A')$, which means that $f(u)\in \mathrm{Ann}_{{A'}^-}(A')$ or $g(v)\in \mathrm{Ann}_{{A'}^+}(A')$. In any case, we have that $f(u)g(v)=0=h(uv)$. Finally, if $u\not\in\mathrm{Ann}_{A^-}(A)$ and $v\not \in \mathrm{Ann}_{A^+}(A)$, then the vertex $t_{u,v}$ connects the vertices $r_u$, $c_v$ and $s_{uv}$ in the graph $G_1(A)$. Now, the isomorphism $\alpha$ maps this vertex $t_{u,v}$ in $G_1(A)$ to a vertex $t_{u',v'}$ in $G_2(A)$ that is connected to the vertices $r_{u'}$, $c_{v'}$ and $s_{u'v'}$. Again, since $\alpha$ preserves collinearity, it is $f(u)=u'$, $g(v)=v'$ and, finally, $h(uv)=f(u)g(v)$.

\vspace{0.1cm}

In case of being $\alpha$ an isomorphism between the graphs $G_2(A)$ and $G_2(A')$ it is enough to consider $f=g=h$ in the previous description. This is well-defined because of the new edges that are included to the graphs $G_1(A)$ and $G_1(A')$ in order to define, respectively, the graphs $G_2(A)$ and $G_2(A')$. These edges involve the multiplicative character of the bijective map $f$, that is, $f(u)g(v)=h(uv)$, for all $u,v\in A$.
\end{proof}

\vspace{0.1cm}

Theorem \ref{theo_graph0} enables us to ensure that graph invariants reduce the cost of computation that is required to distribute a set of finite-dimensional algebras over finite fields into isotopism and isomorphism classes. It is only necessary to compute those reduced Gr\"obner bases in Theorem \ref{thm_CAG_Isom} that are related to a pair of algebras whose associated graphs have equal invariants. The complexity to compute these invariants is always much less than that related to the calculus of a reduced Gr\"obner basis. Thus, for instance, the complexity to compute the number of vertices, edges and triangles of the graphs related to any $n$-dimensional algebra over the finite field $\mathbb{F}_q$ is $q^{O(2n)}$. This corresponds to the computation of the adjacency matrices of both graphs by means of all the possible products among the $q^n$ distinct vectors of any such an algebra. This enables us in particular to implement the formulas exposed in Lemma \ref{lemm_graph0} and Proposition \ref{prop_graph0_a}. Besides, recall that the trace of the adjacency matrix of a graph raised to the third power coincides with the number of triangles of such a graph. All this computation has been implemented in the procedure {\em isoGraph}, which has been included in the mentioned library {\em GraphAlg.lib}. In order to illustrate the efficiency of these invariants, we focus on the set $\mathcal{L}_n(\mathbb{F}_q)$ of $n$-dimensional Lie algebras over the finite field $\mathbb{F}_q$, with $q$ a power prime. Recall that a {\em Lie algebra} is an anti-commutative algebra $A$ that holds the {\em Jacobi identity} $u(vw)+v(wu)+w(uv)=0$, for all $u,v,w\in A$. For $n=3$, it is known \cite{Falcon2016a, Graaf2005, Strade2007} that there are $32$ distinct Lie algebras in $\mathcal{L}_3(\mathbb{F}_2)$, which are distributed into four isotopism classes and six isomorphism classes, and $123$ Lie algebras in $\mathcal{L}_3(\mathbb{F}_3)$, which are distributed into four isotopism classes and seven isomorphism classes. Table \ref{Table_0} shows the run time and memory usage that our computer system requires to determine the mentioned classification depending on whether graph invariants are considered ({\em Graph}) or not ({\em Alg}). Further, Tables \ref{Table_1} and \ref{Table_1a} show, respectively, the invariants of the graphs $G_1$ and $G_2$ related to the isomorphism classes of $\mathcal{L}_3(\mathbb{F}_q)$, for $q\in\{2,3\}$. The components of the $4$-tuples that are indicated in the corresponding columns of vertices in both tables refer, respectively, to the number of vertices in $R_A$, $C_A$, $S_A$ and $T_A$.

\begin{table}[ht]
\begin{center}
\resizebox{\textwidth}{!}{
\begin{tabular}{c|cc|cc|cc}
\ & \multicolumn{2}{c|}{Graph} & \multicolumn{4}{c}{Alg}\\
\ & & & \multicolumn{2}{c|}{Isomorphisms}  & \multicolumn{2}{c}{Isotopisms}\\
$q$ &  Run time  & Memory usage & Run time & Memory usage & Run time & Memory usage\\ \hline
2 & 1 s & 0 Mb & 1 s & 0 Mb & 34 s & 384 Mb\\
3 & 47 s & 3 Mb & 4 s & 6 Mb & \multicolumn{2}{c}{Run out of memory}\\
\end{tabular}}
\caption{Cost of computation to distribute $\mathcal{L}_3(\mathbb{F}_q)$ into isotopism and isomorphism classes.}
\label{Table_0}
\end{center}
\end{table}

\begin{table}[ht]
\resizebox{\textwidth}{!}{
\begin{tabular}{c|cc|cc}
\ & \multicolumn{2}{c|}{$\mathbb{F}_2$} & \multicolumn{2}{c}{$\mathbb{F}_3$} \\
$ A$ & Vertices & Edges & Vertices & Edges \\ \hline
Abelian & (0,0,0,0) & 0 & (0,0,0,0) & 0\\
$e_1e_2=e_3$ & (6,6,1,24) & 72 & (24,24,2,432) & 1296\\
$e_1e_2=e_1$ & (6,6,1,24) & 72 & (24,24,2,432) & 1296\\
$e_1e_2=e_3, e_1e_3=-e_2$ & - & - & (26,26,8,576) & 1728 \\
$e_1e_2=e_3, e_1e_3=e_2$ & (7,7,3,36) & 108 & (26,26,8,576) & 1728\\
$e_1e_2=e_2, e_1e_3=e_3$ & (7,7,3,36) & 108 & (26,26,8,576) & 1728\\
$e_1e_2=e_2, e_1e_3=-e_3, e_2e_3=-e_1$ & (7,7,7,42) & 126 & - & -\\
$e_1e_2=e_2, e_1e_3=-e_3, e_2e_3=2e_1$ & - & - & (26,26,26,624) & 1872\\
\end{tabular}}
\caption{Invariants of the graph $G_1$ related to $\mathcal{L}_3(\mathbb{F}_q)$, for $q\in\{2,3\}$.}
\label{Table_1}
\end{table}

\begin{table}[ht]
\resizebox{\textwidth}{!}{
\begin{tabular}{c|ccc|ccc}
\ & \multicolumn{3}{c|}{$\mathbb{F}_2$} & \multicolumn{3}{c}{$\mathbb{F}_3$} \\
$A$ & Vertices & Edges & Triangles & Vertices & Edges & Triangles\\ \hline
Abelian & (0,0,0,0) & 0 & 0 & (0,0,0,0) & 0 & 0\\
$e_1e_2=e_3$ & (6,6,1,24) & 78 & 0 & (24,24,2,432) & 1320 & 0\\
$e_1e_2=e_1$ & (6,6,1,24) & 80 & 9 & (24,24,2,432) & 1324 & 38\\
$e_1e_2=e_3, e_1e_3=-e_2$ & - & - & - & (26,26,8,576) & 1770 & 8\\
$e_1e_2=e_3, e_1e_3=e_2$ & (7,7,3,36) & 121 & 11 & (26,26,8,576) & 1770 & 80\\
$e_1e_2=e_2, e_1e_3=e_3$ & (7,7,3,36) & 121 & 27 & (26,26,8,576) & 1770 & 152\\
$e_1e_2=e_2, e_1e_3=-e_3, e_2e_3=-e_1$ & (7,7,7,42) & 147 & 19 & - & - & -\\
$e_1e_2=e_2, e_1e_3=-e_3, e_2e_3=2e_1$ & - & - & - & (26,26,26,624) & 1950 & 74\\
\end{tabular}}
\caption{Invariants of the graph $G_2$ related to $\mathcal{L}_3(\mathbb{F}_q)$, for $q\in\{2,3\}$.}
\label{Table_1a}
\end{table}

\section{Graphs and partial quasigroup rings}

Let us finish with an illustrative example that focuses on those graphs $G_1$ and $G_2$ related to the set $\mathcal{P}_n(\mathbb{K})$ of $n$-dimensional partial quasigroup rings over a finite field $\mathbb{K}$ that are based on the known distribution of partial Latin squares of order $n\leq 3$ into isotopism classes. In this regard, Table \ref{Table_PQ} shows several graph invariants that are related to the isotopism classes of $\mathcal{P}_2(\mathbb{F}_q)$, for $q\in\{2,3\}$. Partial Latin squares are written row after row in a single line, with empty cells represented by zeros.

\begin{table}[ht]
\begin{center}
\resizebox{\textwidth}{!}{
\begin{tabular}{c|c|c|cc|c|c|cc}
\ &  \multicolumn{4}{c|}{$\mathbb{F}_2$} &  \multicolumn{4}{c}{$\mathbb{F}_3$}\\
\ & $G_1\, \& \, G_2$ & $G_1$ & \multicolumn{2}{c|}{$G_2$} & $G_1\, \& \, G_2$ & $G_1$ & \multicolumn{2}{c}{$G_2$}\\
Partial Latin square & Vertices & Edges & Edges & Triangles & Vertices & Edges & Edges & Triangles\\ \hline
00 00 & (0,0,0,0) & 0 & 0 & 0 & (0,0,0,0) & 0 & 0 & 0\\
10 00 & (2,2,1,4) & 12 & 16 & 7 & (6,6,2,36) & 108 & 118 & 20\\
10 01 & (3,3,1,6) & 18 & 23 & 7 & (8,8,2,48) & 144 & 156 & 22\\
10 02 & (3,3,3,7) & 21 & 30 & 16 & (8,8,8,56) & 168 & 192 & 48\\
10 20 & (3,2,3,6) & 18 & 25 & 12 & (8,6,8,48) & 144 & 164 & 42\\
12 00 & (2,3,3,6) & 18 & 25 & 12 & (6,8,8,48) & 144 & 164 & 42\\
12 20 & (3,3,3,8) & 24 & 33 & 13 & (8,8,8,60) & 180 & 204 & 38\\
12 21 & (3,3,3,8) & 24 & 33 & 13 & (8,8,8,56) & 168 & 192 & 48\\
\end{tabular}}
\caption{Invariants of the graphs $G_1$ and $G_2$ related to $\mathcal{P}_2(\mathbb{F}_q)$, for $q\in\{2,3\}$.}
\label{Table_PQ}
\end{center}
\end{table}

\begin{thm}\label{theo_PQ2a} The set $\mathcal{P}_2(\mathbb{K})$ is distributed into seven isotopism classes, whatever the base field is.
\end{thm}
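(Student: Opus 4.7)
The plan is to begin with the known list of 8 isotopism classes of partial Latin squares of order 2 (catalogued in the first column of Table \ref{Table_PQ} on the basis of \cite{Falcon2013, Falcon2015a}) and, via Lemma \ref{lemm_partial_magma}, transport them to at most 8 candidate isotopism classes in $\mathcal{P}_2(\mathbb{K})$. Among these, the empty square 00 00 corresponds to the abelian algebra, which by the remark following the definition of structure constants is not isotopic to any other two-dimensional algebra, so it contributes exactly one class and it remains to analyze the 7 non-abelian candidates.

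For the lower bound, I would apply Theorem \ref{theo_graph0}(a): isotopic rings have isomorphic $G_1$ graphs and therefore share all graph invariants. A direct inspection of Table \ref{Table_PQ} shows that over $\mathbb{F}_2$ every pair of non-abelian candidates is already separated by its invariant tuple except for the pair (12\,20, 12\,21), while over $\mathbb{F}_3$ the only undistinguished pair is (10\,02, 12\,21). Consequently, in each of the two cases there are at least 7 pairwise non-isotopic classes of non-abelian rings, hence at least 7 classes in $\mathcal{P}_2(\mathbb{K})$ in total.

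For the upper bound, I would use Theorem \ref{thm_CAG_Isom} and the procedure \emph{isoAlg} to verify, for each surviving candidate pair, that the ideal $I^{\mathrm{Isot}}_{A,A'}$ actually admits a solution giving a non-singular isotopism. Example \ref{ejemplo_PQ2} already supplies the explicit linear isotopism for the pair (12\,20, 12\,21) in characteristic 2, and an analogous argument produces one for the pair (10\,02, 12\,21) in characteristic 3. To handle an arbitrary base field uniformly, I would mimic the symbolic-characteristic computation of Example \ref{ejemplo_PQ2a}: compute the reduced Gr\"obner basis of $I^{\mathrm{Isot}}_{A,A'}$ with the determinant generators removed, read off the resulting polynomial conditions on the characteristic, and check that in every characteristic exactly one merger occurs among the candidate pairs.

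The main obstacle is the uniform book-keeping across all characteristics: one has to guarantee simultaneously that at least one pair collapses (ensuring the count drops from 8 to 7) and that no second pair collapses (ensuring it does not drop below 7). The graph invariants of Table \ref{Table_PQ} are precisely what make this tractable, because they a priori shortlist at most one candidate pair in each characteristic, so the finitely many remaining Gr\"obner-basis checks suffice to establish the uniform count of seven isotopism classes.
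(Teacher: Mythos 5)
Your overall strategy---reduce to the eight isotopism classes of partial Latin squares of order $2$ via Lemma \ref{lemm_partial_magma}, treat the abelian class separately, and decide the remaining mergers by the characteristic-sensitive Gr\"obner computation of Example \ref{ejemplo_PQ2a}---is essentially the route the paper takes: its proof is precisely such a ``computational case study'', and it records the same two mergers you identify (the rings of $12\,20$ and $12\,21$ coincide in characteristic two; those of $10\,02$ and $12\,21$ coincide in every other characteristic). The upper-bound half of your argument (exhibiting at least one merger in every characteristic) is sound as sketched.

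The gap lies in your lower bound for an \emph{arbitrary} base field. The graphs $G_1$ and $G_2$ are defined only for algebras over \emph{finite} fields, and Table \ref{Table_PQ} records their invariants only for $\mathbb{F}_2$ and $\mathbb{F}_3$; Theorem \ref{theo_graph0}(a) therefore separates the candidates only over those two fields. Your claim that the invariants ``a priori shortlist at most one candidate pair in each characteristic'' is unjustified: nothing you have written excludes an isotopism over $\mathbb{Q}$, over $\mathbb{F}_5$, or over a non-prime field such as $\mathbb{F}_4$, between two candidates that happen to be distinguished over $\mathbb{F}_2$ and $\mathbb{F}_3$ (and an isotopism over one field does not transfer to another, so the count cannot be imported from the prime fields). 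To close this, the Example-\ref{ejemplo_PQ2a}-style symbolic computation (dropping the determinant generators and reading off the conditions on the characteristic) must be run on all $\binom{7}{2}$ pairs of non-abelian candidates, not only on the two pairs singled out by the finite-field invariants; alternatively, most pairs can be separated over any field by field-independent isotopy invariants such as the dimensions of $A^2$, $\mathrm{Ann}_{A^-}(A)$ and $\mathrm{Ann}_{A^+}(A)$ supplied by Lemmas \ref{lemm_annihilator} and \ref{lemm_M1}, leaving only a handful of genuine Gr\"obner checks. As it stands, your proposal establishes the count of seven only over $\mathbb{F}_2$ and $\mathbb{F}_3$, not ``whatever the base field is''.
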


\begin{proof} A computational case study based on a similar reasoning to that exposed in Example \ref{ejemplo_PQ2a} enables us to ensure the result. If the characteristic of the base field is distinct from two, then the seven isotopism classes under consideration are those related to the next partial Latin squares of order $2$
$$\begin{array}{|c|c|} \hline
\ & \ \\ \hline
\ & \ \\ \hline \end{array}  \hspace{0.5cm}
\begin{array}{|c|c|} \hline
1 & \ \\ \hline
\ & \ \\ \hline \end{array}  \hspace{0.5cm}
\begin{array}{|c|c|} \hline
1 & \ \\ \hline
\ & 1 \\ \hline \end{array} \hspace{0.5cm}
\begin{array}{|c|c|} \hline
1 & \ \\ \hline
2 & \ \\ \hline \end{array} \hspace{0.5cm}
\begin{array}{|c|c|} \hline
1 & 2 \\ \hline
\ & \ \\ \hline \end{array} \hspace{0.5cm}
\begin{array}{|c|c|} \hline
1 & 2 \\ \hline
2 & \ \\ \hline \end{array}\hspace{0.5cm}
\begin{array}{|c|c|} \hline
1 & 2 \\ \hline
2 & 1 \\ \hline \end{array}$$

Otherwise, if the characteristic of the base field is two, then the isotopism classes related to the last two partial Latin squares coincide. In this case, the next partial Latin square corresponds to the seventh isotopism class
$$\begin{array}{|c|c|} \hline
1 & \ \\ \hline
\ & 2 \\ \hline \end{array}$$

If the characteristic of the base field is distinct from two, the partial quasigroup ring related to this partial Latin square is isotopic to that related to the unique Latin square of the previous list.
\end{proof}

\vspace{0.3cm}

It is known \cite{Falcon2013} that there are $2$, $8$ and $81$ distinct isotopism classes of partial Latin squares of respective orders $1$ to $3$. In order to determine those distinct isotopism classes that give rise to isotopic partial quasigroup rings over the finite field $\mathbb{F}_2$, we have implemented the procedure {\em isoAlg} in our previously mentioned computer system. With a total run time of $761$ seconds, we have obtained that there exist $2$, $7$ and $72$ distinct isotopism classes of partial quasigroup rings of respective dimensions $1$ to $3$. Particularly, the existence of two classes for the one-dimensional case agrees with that exposed in Subsection 2.3. Besides, the seven isotopism classes for the two-dimensional case agrees with Theorem \ref{theo_PQ2a}. For the three-dimensional case, the next nine pairs of non-isotopic partial Latin squares give rise to isotopic partial quasigroup rings
$${\scriptsize\begin{array}{ccccc}
\begin{array}{|c|c|c|} \hline
1 & 2 & \ \\ \hline
2 & \ & \ \\ \hline
\ & \ & \ \\ \hline \end{array}  \text { and } \begin{array}{|c|c|c|} \hline
1 & 2 & \ \\ \hline
2 & 1 & \ \\ \hline
\ & \ & \  \\ \hline \end{array} & , &
\begin{array}{|c|c|c|} \hline
1 & 2 & \ \\ \hline
2 & \ & \ \\ \hline
\ & \ & 1 \\ \hline \end{array}  \text { and } \begin{array}{|c|c|c|} \hline
1 & 2 & \ \\ \hline
2 & 1 & \ \\ \hline
\ & \ & 1 \\ \hline \end{array} & , & \begin{array}{|c|c|c|} \hline
1 & 2 & \ \\ \hline
2 & \ & \ \\ \hline
\ & \ & 3 \\ \hline \end{array}  \text { and } \begin{array}{|c|c|c|} \hline
1 & 2 & \ \\ \hline
2 & 1 & \ \\ \hline
\ & \ & 3  \\ \hline \end{array}\\ \\
\begin{array}{|c|c|c|} \hline
1 & 2 & \ \\ \hline
\ & 1 & \ \\ \hline
3 & \ & \ \\ \hline \end{array} \text { and }
\begin{array}{|c|c|c|} \hline
1 & 2 & \ \\ \hline
2 & 1 & \ \\ \hline
3 & \ & \  \\ \hline \end{array} &,&\begin{array}{|c|c|c|} \hline
1 & 2 & \ \\ \hline
\ & 1 & 3 \\ \hline
\ & \ & \ \\ \hline \end{array} \text { and }
\begin{array}{|c|c|c|} \hline
1 & 2 & 3 \\ \hline
2 & 1 & \ \\ \hline
\ & \ & \  \\ \hline \end{array} &,&
\begin{array}{|c|c|c|} \hline
1 & 2 & \ \\ \hline
\ & 1 & \ \\ \hline
3 & \ & 2 \\ \hline \end{array}  \text { and } \begin{array}{|c|c|c|} \hline
1 & 2 & \ \\ \hline
2 & 1 & \ \\ \hline
3 & \ & 1 \\ \hline \end{array}\\ \\
\begin{array}{|c|c|c|} \hline
1 & 2 & \ \\ \hline
2 & \ & 3 \\ \hline
\ & \ & 1 \\ \hline \end{array}   \text { and } \begin{array}{|c|c|c|} \hline
1 & 2 & \ \\ \hline
2 & 1 & 3 \\ \hline
\ & \ & 1 \\ \hline \end{array} &,&
\begin{array}{|c|c|c|} \hline
1 & 2 & \ \\ \hline
\ & 1 & 3 \\ \hline
3 & \ & \ \\ \hline \end{array} \text { and } \begin{array}{|c|c|c|} \hline
1 & 2 & \ \\ \hline
2 & 1 & 3 \\ \hline
3 & \ & \ \\ \hline \end{array}&,&
\begin{array}{|c|c|c|} \hline
1 & 2 & \ \\ \hline
\ & 1 & 3 \\ \hline
3 & \ & 2 \\ \hline \end{array} \text { and } \begin{array}{|c|c|c|} \hline
1 & 2 & 3 \\ \hline
2 & 1 & \ \\ \hline
3 & \ & 1 \\ \hline \end{array}
\end{array}}$$

\vspace{0.2cm}

The run time of 761 seconds that are required to determine the mentioned distribution of partial quasigroup rings reduces to only 30 seconds in the same computer system if the invariants that we have just exposed in Table \ref{Table_PQ} and those exposed in Table \ref{Table_PQ3} are previously computed. The new run time includes indeed the extra 9 seconds of computation that is required for computing such invariants.

\begin{table}[ht]
\resizebox{\textwidth}{!}{
\begin{tabular}{ccc|ccc|ccc}
Partial Latin square & Vertices & Edges & Partial Latin square & Vertices & Edges & Partial Latin square & Vertices & Edges\\ \hline
100 000 000	& (4,4,1,16) & 48  & 100 010 002 & (7,7,3,34)  & 120 & 031 302	& (7,7,7,42) & 126	\\
120 000 000	& (4,6,3,24) & 72  & 120 001 002	& (7,7,3,36) & 108 & 120 210 301	 & (7,7,7,42) & 126	\\
123 000 000	& (4,7,7,28) & 84  & 120 200 002	& (7,7,3,36) & 108 & 120 213 001	 & (7,7,7,42) & 126	\\
100 200 000	& (6,4,3,24) & 72 & 120 200 001	& (7,7,3,38) & 114 & 120 213 300	 & (7,7,7,42) & 126	\\
100 010 000	& (6,6,1,24) & 72 & 120 210 001	& (7,7,3,38) & 114 & 120 001 312	 & (7,7,7,43) & 129	\\
100 020 000	& (6,6,3,28) & 84 & 120 201 010	& (7,7,3,40) & 120 & 120 201 302	 & (7,7,7,43) & 129	\\
120 200 000	& (6,6,3,32) & 96 & 120 201 012	& (7,7,3,40) & 120 & 120 231 300	 & (7,7,7,43) & 129	\\
120 210 000	& (6,6,3,32) & 96 & 100 020 003	& (7,7,7,37) & 111 & 123 231 312	 & (7,7,7,43) & 129	\\
120 000 300	& (6,6,6,32) & 96 & 120 002 003	& (7,7,7,38) & 114 & 120 003 312	 & (7,7,7,44) & 132	\\
120 000 310	& (6,6,6,36) & 108 & 120 002 300	& (7,7,7,38) & 114 & 120 013 301	 & (7,7,7,44) & 132	\\
120 001 000	& (6,7,3,32) & 96 & 120 003 300	& (7,7,7,38) & 114 & 120 013 302	 & (7,7,7,44) & 132	\\
120 012 000	& (6,7,3,36) & 108 & 120 001 300	& (7,7,7,39) & 117 & 120 200 312	 & (7,7,7,44) & 132	\\
120 003 000	& (6,7,7,34) & 102 & 120 200 003	& (7,7,7,40) & 120 & 120 203 301	 & (7,7,7,44) & 132	\\
120 000 302	& (6,7,7,36) & 108 & 120 200 302	& (7,7,7,40) & 120 & 123 210 301	 & (7,7,7,44) & 132	\\
123 200 000	& (6,7,7,36) & 108 & 120 210 003	& (7,7,7,40) & 120 & 123 031 310	 & (7,7,7,45) & 135	\\
120 013 000	& (6,7,7,38) & 114 & 123 010 001	& (7,7,7,40) & 120 & 123 200 312	 & (7,7,7,45) & 135	\\
123 210 000	& (6,7,7,38) & 114 & 123 200 300	& (7,7,7,40) & 120 & 123 230 310	 & (7,7,7,45) & 135	\\
123 230 000	& (6,7,7,40) & 120 & 120 001 302	& (7,7,7,41) & 123 & 123 012 230	 & (7,7,7,46) & 138	\\
123 231 000	& (6,7,7,40) & 120 & 120 001 310	& (7,7,7,41) & 123 & 123 210 031	 & (7,7,7,46) & 138	\\
100 200 300	& (7,4,7,28) & 84 & 120 201 300	    & (7,7,7,41) & 123 & 123 201 312	 & (7,7,7,46) & 138 	\\
100 200 010	& (7,6,3,32) & 96 & 123 200 010	    & (7,7,7,41) & 123	\\
120 200 010	& (7,6,3,36) & 108 & 120 003 310	& (7,7,7,42) & 126	\\
100 200 030	& (7,6,7,34) & 102 & 120 010 301	& (7,7,7,42) & 126	\\
120 030 300	& (7,6,7,36) & 108 & 120 010 302	& (7,7,7,42) & 126	\\
120 200 300	& (7,6,7,36) & 108 & 120 012 300	& (7,7,7,42) & 126	\\
120 010 300	& (7,6,7,38) & 114 & 120 013 300	& (7,7,7,42) & 126	\\
120 210 300	& (7,6,7,38) & 114 & 120 200 013	& (7,7,7,42) & 126	\\
120 230 300	& (7,6,7,40) & 120 & 120 203 001	& (7,7,7,42) & 126	\\
120 230 310	& (7,6,7,40) & 120 & 120 203 300	& (7,7,7,42) & 126	\\
100 010 001	& (7,7,1,28) & 84 & 123 010 300	    & (7,7,7,42) & 126	\\
\end{tabular}}
\caption{Invariants of the graph $G_1$ related to non-abelian partial algebras in $\mathcal{P}_3(\mathbb{F}_2)$.}
\label{Table_PQ3}
\end{table}

\section{Conclusion and further studies}

We have described in this paper a pair of graphs that enable us to define faithful functors between finite-dimensional algebras over finite fields and these graphs. The computation of related graph invariants plays a remarkable role in the distribution of distinct families of algebras into isotopism and isomorphism classes. Some preliminary results have been exposed in this regard, particularly on the distribution of partial quasigroup rings over finite fields. Based on the known classification of partial Latin squares into isotopism classes, further work is required to determine completely this distribution.

\end{document}